\title{Homogenization principle and numerical analysis for fractional stochastic differential equations with different scales\thanks{Submitted to the editors \today.
}}
\author{Zhaoyang Wang\thanks{School of Mathematical Sciences, Laboratory of Mathematics and Complex Systems, MOE, Beijing Normal University, Beijing 100875, China; Research Center for Mathematics, Advanced Institute of Natural Sciences, Beijing Normal University, Zhuhai, Guangdong 519087, China.
(\email{zhaoyang584520@163.com}).}
\and Ping Lin\thanks{Corresponding author. Division of Mathematics, University of Dundee, Dundee DD1 4HN, United Kingdom 
(\email{p.lin@dundee.ac.uk}).}
}
\begin{document}

\maketitle

\begin{abstract}
This work is concerned with fractional stochastic differential equations with different scales. We establish the existence and uniqueness of solutions for Caputo fractional stochastic differential systems under the non-Lipschitz condition. Based on the idea of temporal homogenization, we prove that the homogenization principle (averaging principle) holds in the sense of mean square ($L^2$ norm) convergence under a novel homogenization assumption. Furthermore, an Euler-Maruyama scheme for the non-autonomous system is constructed and its numerical error is analyzed. Finally, two numerical examples are presented to verify the theoretical results. Different from the existing literature, we demonstrate the computational advantages of the homogenized autonomous system from a numerical perspective. 
\end{abstract}

\begin{keywords}
fractional stochastic differential equations, multiscale, homogenization principle, numerical analysis
\end{keywords}

\begin{MSCcodes}
34F05, 60H10, 65C30
\end{MSCcodes}

\section{Introduction}
\label{section1}
Stochastic differential equations (SDEs) are a critical mathematical tool extensively used in various scientific and engineering fields to model dynamic systems with inherent randomness or uncertainty. Traditional deterministic differential equations fall short in capturing the random perturbations presented in these systems, whereas SDEs incorporate stochastic processes to more reasonably describe the unpredictable effects observed in nature and engineering \cite{mao2007stochastic}. 

In the past two decades, fractional calculus and fractional differential equations have been extensively studied and applied in many fields \cite{podlubny1998fractional, metzler2000random, wang2024error, ainsworth2020fractional}, and stochastic systems have no exception. Sakthivel et al. \cite{sakthivel2013existence} used stochastic analysis theory to study the existence of solutions for fractional stochastic semilinear differential equations with nonlocal conditions. Li et al. \cite{li2017fractional} proposed a fractional stochastic differential equation model which is consistent with the over-damped limit of the generalized Langevin equation, and suitable for systems in contact of heat bath with subdiffusion behaviors. Furthermore, they established the existence of strong solutions and discussed the ergodicity and convergence to Gibbs measure. Recently, the well-posedness of a class of Caputo fractional stochastic differential equations is established \cite{wang2020note}. For the numerical approximation of fractional stochastic differential equations (FSDEs), the Euler-Maruyama method is simple and effective. Doan et al. \cite{doan2020euler} constructed a Euler-Maruyama type scheme for Caputo FSDEs. Yang \cite{yang2022numerical} developed a Euler-Maruyama scheme for non-autonomous systems involving Caputo-Hadamard fractional derivatives and conducted error analysis. 

On the other hand, for stochastic oscillatory systems with variables of different scales, the homogenization method \cite{weinan2011principles, blanc2023homogenization} is one of the effective tools for studying such systems. The homogenization principle (averaging principle) for stochastic differential equations was first introduced by Khasminskii \cite{Khsminskii1968}. In recent years, with the development of fractional calculus, the homogenization principle has been extended to FSDEs \cite{xu2019averaging, shen2022averaging, guo2023averaging, li2023existence}. To theoretically address the difficulties caused by the singular kernels in fractional operators, researchers have proposed various assumptions, but most of these are not applicable to long-term oscillation systems. In addition, it is worth noting that for stochastic differential equations with variables of different scales, the homogenized equation under certain conditions can indeed simplify the original equation into an autonomous system for easier analysis. However, to date, research on the homogenization of FSDEs does not appear to have shown practical benefits for solving application problems.

In this paper, we focus on the following  Caputo type fractional stochastic differential equations with different scales driven by Brownian motion:

\begin{equation}\label{the1-1}
\begin{split}
\begin{cases}
^CD_{0}^{\alpha}x_\epsilon(t)=f\left(\frac{t}{\epsilon},x_\epsilon\left(t\right)\right)dt+g\left(\frac{t}{\epsilon},x_\epsilon(t)\right)dB_t, \quad t\in[0,T]\\
x_\epsilon(0)=x_0,
\end{cases}
\end{split}
\end{equation}
where $^CD_{0}^{\alpha}$ denotes the Caputo fractional derivative with $\alpha\in(\frac{1}{2}, 1)$, $\epsilon\ll 1$ is a scale parameter. $f:\mathbb{R}^+\times\mathbb{R}^n\rightarrow\mathbb{R}^n$ and $g:\mathbb{R}^+\times\mathbb{R}^n\rightarrow\mathbb{R}^n \otimes \mathbb{R}^m$ are measurable continuous function, $B_t$ is an $m$-dimensional Brownian motion on a complete probability space $(\Omega,\mathcal{F},P)$. This model is derived from a typical dynamical system with scale separation \cite{pavliotis2008multiscale}. As previously mentioned, (\ref{the1-1}) has many theoretical results on well-posedness and homogenization principle, but there are still some important and interesting problems that deserve to be studied. Inspired by our recent work on fractional-order temporal multiscale problem \cite{wang2023fast}, we propose a weaker homogenization condition to prove the homogenization principle for FSDEs. Furthermore, we analyze and show the advantages of the homogenization method in numerical computations. Compared to existing literature, our contributions are threefold:

\textbullet \ We establish the well-posedness for the solution of (\ref{the1-1}) with the nonlinear terms unnecessarily satisfying the Lipschitz condition in the sense of $L^p$ ($p$th moment).

\textbullet \ The homogenization principle for FSDEs is proven in the sense of mean square. The proposed homogenization assumption is applicable to long-term oscillation systems.

\textbullet \ We propose an Euler-Maruyama scheme and provide error analysis, pointing out the numerical advantages of homogenization for multiscale non-autonomous stochastic systems. Numerical experiments verify the results of theoretical analysis.

The paper is organized as follows. In Section \ref{section2}, we give some definitions and assumptions. In Section \ref{section3}, we prove the well-posedness of solutions for FSDEs. In Section \ref{section4}, we establish an approximation theorem as a homogenization principle for the solutions of the concerned FSDEs. In Section \ref{section5}, we construct an Euler-Maruyama scheme and perform rigorous error analysis. Numerical examples are shown in Section \ref{section6} to verify the correctness of the theory. Some conclusions and remarks are given in final Section \ref{section7}.

\textbf{Notation.} Let $|\cdot|$ denotes the Euclidean norm in $n$-dimensional Euclidean space $\mathbb{R}^n$ and $\Vert\cdot\Vert$ be the norm of $\mathbb{R}^n \otimes \mathbb{R}^m$. For $T>0$, let $C\left([0,T];\mathbb{R}^n\right)$ be the Banach space of all $\mathbb{R}^n$-valued continuous functions on $[0,T]$, equipped with the supremum norm. We define a Banach space $L^p\left(\Omega; C\left([0,T];\mathbb{R}^n\right)\right)$ be the totality of $C\left([0,T];\mathbb{R}^n\right)$-valued random variables $x$ with the norm $\Vert x\Vert_{L^p}:=\left[\mathbb{E}\left(\sup\limits_{t\in[0,T]}|x(t)|^p\right)\right]^{1/p}$. Throughout this paper, we use $C$ to denote a generic positive constant, with or without subscript, its value may change from one line of an
estimate to the next.

\section{Preliminaries}
\label{section2}
In this section, we give the basic definitions of the fractional calculus and impose appropriate assumptions on the nonlinear terms $f$ and $g$. 

\begin{definition}
Let $f(t)$ be a differentiable function, the $\alpha\in(0,1)$ order Caputo fractional derivative is defined as \cite{Podlubny1999}
\begin{equation}\label{the2-1}
\begin{split}
D_{0^+}^{\alpha}f(t)=\frac{1}{\Gamma(1-\alpha)}\int_0^{t}\frac{f'(s)}{(t-s)^{\alpha}}ds.
\end{split}
\end{equation}

\end{definition}

\begin{definition}
The Riemann-Liouville fractional integral for a function $f(t)\in L^1[0,T]$ of order $\alpha\in(0,1)$ is denoted by \cite{Podlubny1999}
\begin{equation}\label{the2-2}
	\begin{split}
	I^{\alpha}_{0^+}f(t)=\frac{1}{\Gamma(\alpha)}\int_0^{t}(t-s)^{\alpha-1}f(s)ds.
	\end{split}
\end{equation}
\end{definition}

\begin{remark}
Based on the definition of the Caputo derivative and fractional integral, we have the following property
	\begin{equation}\label{the2-3}
		\begin{split}
			I_{0^+}^{\alpha}D_{0^+}^{\alpha}u(t)=u(t)-u(0).
		\end{split}
	\end{equation}
\end{remark}

Taking $I_{0^+}^{\alpha}$ on both sides of (\ref{the1-1}), then it becomes the following equivalent Volterra integral equation
	\begin{equation}\label{the2-4}
	\begin{split}
x_\epsilon(t)=x_0+\frac{1}{\Gamma(\alpha)}\int_0^t (t-s)^{\alpha-1}f\left(\frac{s}{\epsilon},x_\epsilon(s)\right)ds+\frac{1}{\Gamma(\alpha)}\int_0^t (t-s)^{\alpha-1}g\left(\frac{s}{\epsilon},x_\epsilon(s)\right)dB_s.
	\end{split}
\end{equation}

Next, we impose essential assumptions for nonlinear terms and introduce a lemma for the smoothness of subsequent analysis.

\begin{assumption}
\label{assumption1}
There exists a continuous and non-decreasing concave function $\psi:\mathbb{R}^{+}\rightarrow\mathbb{R}^{+}$ that satisfies $\psi(t)>0$ for $t>0$, and $\int_{0^{+}}\frac{1}{\psi(s)}ds=+\infty$, such that all $x_1, x_2\in \mathbb{R}^n$ and $t\in [0,T]$,
\begin{equation}\label{the2-5}
\begin{split}
\left|f(t,x_1)-f(t,x_2)\right|^p\vee \Vert g(t,x_1)-g(t,x_2)\Vert^p \leq \psi(\left|x_1-x_2\right|^p),
\end{split}
\end{equation}
where $p\geq 2$. In addition, for $f(t,0)$ and $g(t,0)$, there exists a positive constant $L$ such that
\begin{equation}\label{the2-6}
	\begin{split}
		\left|f(t,0)\right|^p\vee \Vert g(t,0)\Vert^p \leq L.
	\end{split}
\end{equation}
\end{assumption}

\begin{remark}
\label{remark2-5}
By using Assumption \ref{assumption1}, we can have the following estimate
\begin{equation}\label{the2-7}
\begin{split}
\left|f(t,x)\right|^p&=\left|f(t,x)-f(t,0)+f(t,0)\right|^p\\
&\leq 2^{p-1}\left|f(t,x)-f(t,0)\right|^p+2^{p-1}\left|f(t,0)\right|^p\\
&\leq 2^{p-1}\psi(|x|^p)+2^{p-1}L.
\end{split}
\end{equation}
We can obtain an estimate of $\Vert g(t,x)\Vert^p$ in a similar way.
\end{remark}

\begin{lemma}
\label{lemma2-6}
(The Bihari inequality \cite{ouaddah2021fractional}) Let $u:[0,T]\rightarrow [0,\infty]$ be a continuous function, $\psi:\mathbb{R}^{+}\rightarrow\mathbb{R}^{+}$ be a nondecreasing continuous function, and $v$ be a nonnegative integrable function on $[0,T]$. If there exist a constant $c>0$ such that the following integral inequality holds
\begin{equation}\label{the2-8}
\begin{split}
u(t)\leq c+\int_{0}^t v(s)\psi\left(u(s)\right)ds,
\end{split}
\end{equation}
then
\begin{equation}\label{the2-9}
	\begin{split}
		u(t)\leq W^{-1}\left(W(c)+\int_0^t v(s)ds\right), \ t\in[0,T].
	\end{split}
\end{equation}
Here, $W$ is defined by
\begin{equation}\label{the2-10}
\begin{split}
W(r)=\int_1^r \frac{1}{\psi(s)}ds,
\end{split}
\end{equation}
$W^{-1}$ is the inverse function of $W$, and for every $t\in[0,T]$,
\begin{equation}\label{the2-11}
\begin{split}
\psi(c)+\int_0^t v(s)ds \in Dom(W^{-1}),
\end{split}
\end{equation}
$Dom(W^{-1})$ is the domain of $W^{-1}$.
\end{lemma}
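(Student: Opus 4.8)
The plan is to reduce the integral inequality (\ref{the2-8}) to a differential inequality for a suitable majorant and then integrate it after the change of variables induced by $W$. First I would set
\[
z(t) = c + \int_0^t v(s)\,\psi(u(s))\,ds ,
\]
and collect its elementary properties: $z(0) = c > 0$; $z$ is nondecreasing since $v \geq 0$ and $\psi \geq 0$; and $u(t) \leq z(t)$ on $[0,T]$ directly from (\ref{the2-8}). Because $\psi(u(\cdot))$ is continuous, hence bounded, on $[0,T]$ while $v$ is integrable, the product $v\,\psi(u)$ lies in $L^1[0,T]$, so $z$ is absolutely continuous with $z'(t) = v(t)\,\psi(u(t))$ for almost every $t$.

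Next I would use monotonicity of $\psi$: from $u(t)\leq z(t)$ we obtain $\psi(u(t)) \leq \psi(z(t))$, hence
\[
z'(t) \leq v(t)\,\psi(z(t)) \qquad \text{for almost every } t\in[0,T].
\]
On the compact interval $\bigl[\,c,\ \sup_{t\in[0,T]}z(t)\,\bigr]\subset(0,\infty)$ the function $\psi$ is bounded below by a positive constant, so $W(r)=\int_1^r \psi(s)^{-1}\,ds$ is $C^1$ there with $W'(r)=1/\psi(r)$, in particular Lipschitz. Therefore $t\mapsto W(z(t))$ is absolutely continuous, and the chain rule combined with the displayed inequality yields
\[
\frac{d}{dt}W(z(t)) = \frac{z'(t)}{\psi(z(t))} \leq v(t) \qquad \text{for almost every } t\in[0,T].
\]

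Integrating from $0$ to $t$ and using $z(0)=c$ gives $W(z(t)) \leq W(c) + \int_0^t v(s)\,ds$; hypothesis (\ref{the2-11}) guarantees that the right-hand side belongs to the domain of $W^{-1}$, and since $W'=1/\psi>0$ the map $W$ is strictly increasing, so $W^{-1}$ is increasing and applying it gives $z(t) \leq W^{-1}\!\bigl(W(c)+\int_0^t v(s)\,ds\bigr)$. Combining with $u(t)\leq z(t)$ yields (\ref{the2-9}). I expect the only genuine obstacle to be the regularity bookkeeping — justifying absolute continuity of $z$ and of $W\circ z$ so that the fundamental theorem of calculus and the chain rule are legitimate under the almost-everywhere derivative identities — together with keeping $z(t)$ inside the region where $\psi$ is strictly positive, which is precisely why $c>0$ and the positivity of $\psi$ on $(0,\infty)$ are needed; the remaining manipulations are routine.
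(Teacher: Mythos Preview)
Your argument is the standard proof of Bihari's inequality and is correct: defining the majorant $z(t)=c+\int_0^t v(s)\psi(u(s))\,ds$, differentiating, using monotonicity of $\psi$ to pass from $\psi(u)$ to $\psi(z)$, and then integrating $W'(z)z'\leq v$ is exactly how the result is established. The regularity remarks you make (absolute continuity of $z$ and of $W\circ z$, positivity of $\psi$ on $[c,\sup z]$ so that $1/\psi$ is bounded there) are the right points to check, and the hypothesis $c>0$ together with the implicit positivity of $\psi$ on $(0,\infty)$ handles them.

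As for comparison: the paper does not give a proof of this lemma at all --- it is simply quoted from the cited reference \cite{ouaddah2021fractional} as a known tool. So there is no alternative argument in the paper to compare with; your write-up supplies what the paper omits, and does so in the classical way.
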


\section{Well-posedness}
\label{section3}
In this section, we will establish the existence and uniqueness for the solution of (\ref{the1-1}) with assumption \ref{assumption1} by using the Picard iteration method. Let $x_\epsilon:=x$ and $\epsilon=1$ without loss of generality. We then construct a sequence that satisfies
\begin{equation}\label{the3-1}
	\begin{split}
x_{k+1}(t)=x_0+\frac{1}{\Gamma(\alpha)}\int_0^t (t-s)^{\alpha-1}f\left(s,x_k(s)\right)ds+\frac{1}{\Gamma(\alpha)}\int_0^t (t-s)^{\alpha-1}g\left(s,x_k(s)\right)dB_s,
	\end{split}
\end{equation}
where $k\geq 0$ is an integer.

\begin{theorem}
\label{theorem3-1}
Suppose that Assumption \ref{assumption1} holds and $\alpha\in\left(1-\frac{1}{p},1\right]$, $p\geq 2$. Then the equation (\ref{the1-1}) has a unique solution $x(t)\in L^p\left(\Omega;C\left([0 \ T];\mathbb{R}^n\right)\right)$. Moreover, $x(t)$ has the following $p$th moment estimate
\begin{equation}\label{the3-2}
\begin{split}
\sup_{0\leq t\leq T}\mathbb{E}\left|x(t)\right|^p\leq C_{T31}
\end{split}
\end{equation}
with a constant $C_{T31}>0$ that depends on $p,L,\alpha,x_0,T$.

\end{theorem}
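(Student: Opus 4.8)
The plan is to run a Picard iteration on the equivalent Volterra form (\ref{the2-4}), replacing the usual contraction estimate---which is unavailable because $f,g$ need not be Lipschitz---by the Bihari inequality of Lemma \ref{lemma2-6} used in tandem with the concavity of $\psi$. One first checks that the iterates (\ref{the3-1}), started from $x_0(t)\equiv x_0$, are well defined: inductively each $x_k$ is $\mathcal{F}_t$-adapted with continuous sample paths, so the stochastic integral in (\ref{the3-1}) makes sense and $x_{k+1}$ inherits these properties.

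\textbf{A uniform $L^p$ bound on the iterates.} Write $u_k(t):=\sup_{0\le s\le t}\mathbb{E}|x_k(s)|^p$. Applying $|a+b+c|^p\le 3^{p-1}(|a|^p+|b|^p+|c|^p)$ to (\ref{the3-1}) and treating the three terms separately, I would estimate the drift term by H\"older's inequality in $s$ and the stochastic term by the Burkholder--Davis--Gundy inequality followed by H\"older (or Minkowski's integral inequality); both reductions hinge on the integrability over $[0,T]$ of suitable powers of the singular kernels $(t-s)^{\alpha-1}$ and $(t-s)^{2(\alpha-1)}$, which is exactly where the hypothesis $\alpha>1-\tfrac1p$ enters. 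Bounding $|f|^p$ and $\|g\|^p$ through Remark \ref{remark2-5} by $2^{p-1}\psi(|x_k|^p)+2^{p-1}L$, taking expectations, and using Jensen's inequality with the concavity of $\psi$ to move $\mathbb{E}$ inside $\psi$, I arrive (after absorbing the non-singular contribution into the singular one, harmless on a bounded interval) at an inequality of the form
$$u_{k+1}(t)\le C_1+C_2\int_0^t (t-s)^{2(\alpha-1)}\psi(u_k(s))\,ds,\qquad t\in[0,T],$$
with $C_1,C_2$ depending only on $p,L,\alpha,x_0,T$. Since $u_0\equiv|x_0|^p$ is constant and $\psi$ is non-decreasing, a monotone induction combined with the (weakly singular) Bihari inequality yields a bound $\sup_k u_k(T)\le C_{T31}$ independent of $k$. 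Here one uses that a nonnegative concave $\psi$ grows at most linearly at infinity, so $\int^{\infty}ds/\psi(s)=+\infty$ and condition (\ref{the2-11}) of Lemma \ref{lemma2-6} holds on all of $[0,T]$; the same computation applied to the solution itself then gives the a priori estimate (\ref{the3-2}).

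\textbf{Convergence and identification of the limit.} Let $\Delta_k(t):=\mathbb{E}\sup_{0\le s\le t}|x_{k+1}(s)-x_k(s)|^p$. Subtracting two consecutive relations (\ref{the3-1}) and repeating the above estimates---now using the difference bound (\ref{the2-5}) in place of Remark \ref{remark2-5}, so that no additive constant appears---gives $\Delta_{k+1}(t)\le C_3\int_0^t (t-s)^{2(\alpha-1)}\psi(\Delta_k(s))\,ds$. By the first step $\{\Delta_k\}$ is uniformly bounded, so $\overline\Delta(t):=\limsup_{k\to\infty}\Delta_k(t)$ is finite; Fatou's lemma and the monotonicity of $\psi$ turn the recursion into $\overline\Delta(t)\le C_3\int_0^t(t-s)^{2(\alpha-1)}\psi(\overline\Delta(s))\,ds$, and the Bihari inequality with vanishing constant together with $\int_{0^+}ds/\psi(s)=+\infty$ forces $\overline\Delta\equiv0$. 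Hence $\{x_k\}$ is Cauchy in $L^p(\Omega;C([0,T];\mathbb{R}^n))$, and its limit $x$ lies in that space. Passing to the limit in (\ref{the3-1})---using continuity of $f,g$, the moment bound of the first step, dominated convergence for the drift integral (with integrable majorant $(t-s)^{\alpha-1}$), and an $L^2$/BDG estimate with (\ref{the2-5}) for the stochastic integral---shows that $x$ satisfies (\ref{the2-4}), equivalently (\ref{the1-1}). Uniqueness follows the same pattern: the difference $\delta(t):=\mathbb{E}\sup_{0\le s\le t}|x(s)-\widetilde x(s)|^p$ of two solutions satisfies $\delta(t)\le C\int_0^t(t-s)^{2(\alpha-1)}\psi(\delta(s))\,ds$, and Lemma \ref{lemma2-6} with $c=0$ and $\int_{0^+}ds/\psi(s)=+\infty$ gives $\delta\equiv0$.

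\textbf{Expected main obstacle.} Because the Picard map is not a contraction, both convergence and uniqueness must be extracted from the interplay of three ingredients that do not cooperate entirely smoothly: the concavity of $\psi$ (needed to push $\mathbb{E}$ through $\psi$ via Jensen), the weakly singular kernel $(t-s)^{\alpha-1}$ (which constrains $\alpha$ to $(1-\tfrac1p,1]$ and demands a fractional rather than classical Bihari/Gr\"onwall argument), and the non-martingale nature of the fractional stochastic convolution $\int_0^t(t-s)^{\alpha-1}g\,dB_s$, which makes every step where the time supremum must be taken inside the expectation (in particular the definition and estimation of $\Delta_k$ and $\delta$) delicate. Technically, the crux is to bring the singular-kernel integral inequalities $u(t)\le c+C\int_0^t(t-s)^{2(\alpha-1)}\psi(u(s))\,ds$ into the form $u(t)\le c+\int_0^t v(s)\psi(u(s))\,ds$ covered by Lemma \ref{lemma2-6}---for instance by iterating the inequality until the convolved kernel becomes bounded, exploiting the semigroup property of Riemann--Liouville kernels together with the sub-additivity that concavity of $\psi$ provides.
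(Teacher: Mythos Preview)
Your overall strategy---Picard iteration on (\ref{the2-4}), Jensen via the concavity of $\psi$, and Bihari in place of a contraction estimate---matches the paper's, and the moment bound and uniqueness steps are essentially correct. Two small corrections there: for the uniform bound the paper takes the shorter route of using $\psi(u)\le C(1+u)$ and a weakly singular Gr\"onwall inequality applied to $X_n:=\max_{k\le n+1}\mathbb{E}|x_k|^p$ rather than Bihari; and after BDG plus a H\"older/Jensen step the surviving singular kernel is $(t-s)^{(\alpha-1)p}$, not $(t-s)^{2(\alpha-1)}$---see (\ref{the3-6})---whose integrability is precisely the condition $\alpha>1-\tfrac1p$. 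Your displayed exponent is correct only for $p=2$.

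The convergence step, however, has a genuine gap. You control the \emph{consecutive} increments $\Delta_k(t)=\mathbb{E}\sup_{s\le t}|x_{k+1}(s)-x_k(s)|^p$, derive $\Delta_{k+1}(t)\le C\int_0^t(t-s)^{\gamma}\psi(\Delta_k(s))\,ds$, and conclude via Bihari that $\overline\Delta:=\limsup_k\Delta_k\equiv0$. That much is fine, but it yields only $\|x_{k+1}-x_k\|_{L^p}\to0$, which does \emph{not} imply that $\{x_k\}$ is Cauchy. In the Lipschitz case one would iterate the recursion to obtain a summable series, but under a bare Osgood modulus (e.g.\ $\psi(u)=u\log(e/u)$ near $0$) no rate is available and the telescoping bound on $\|x_m-x_k\|$ fails. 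The paper sidesteps this by estimating $w_{m,k}(t)=\sup_{r\le t}\mathbb{E}|x_m(r)-x_k(r)|^p$ for \emph{arbitrary} indices: the same manipulations give $w_{m+1,k+1}(t)\le C\int_0^t\bigl(1+(t-s)^{(\alpha-1)p}\bigr)\psi(w_{m,k}(s))\,ds$, and since $\limsup_{m,k}w_{m+1,k+1}=\limsup_{m,k}w_{m,k}=:\bar w$, one obtains a closed Bihari inequality for $\bar w$ itself, whence $\bar w\equiv0$---which \emph{is} the Cauchy property. The fix to your outline is therefore a one-line change (work with arbitrary pairs rather than successive ones), but as written the inference ``hence $\{x_k\}$ is Cauchy'' is unjustified.
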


\begin{proof}
\textbf{Moment estimate.} Recalling the inequality
\begin{equation}\label{the3-3}
\begin{split}
\left|x_1+x_2+x_3\right|^p\leq 3^{p-1}\left(\left|x_1\right|^p+\left|x_2\right|^p+\left|x_3\right|^p\right),
\end{split}
\end{equation}
we have
\begin{equation}\label{the3-4}
\begin{split} 
\mathbb{E}\left|x_{k+1}(t)\right|^p&\leq 3^{p-1} \mathbb{E}|x_0|^p+ \frac{3^{p-1}}{\Gamma(\alpha)^p} \mathbb{E}\left|\int_0^t(t-s)^{\alpha-1}f(s,x_k(s))ds\right|^p+\frac{3^{p-1}}{\Gamma(\alpha)^p} \mathbb{E}\left|\int_0^t(t-s)^{\alpha-1}g(s,x_k(s))dB_s\right|^p\\
&=: 3^{p-1}\mathbb{E}|x_0|^p+\frac{3^{p-1}}{\Gamma(\alpha)^p}I_1+\frac{3^{p-1}}{\Gamma(\alpha)^p}I_2.
\end{split}
\end{equation}
	
By using the H\"{o}lder inequality and Assumption \ref{assumption1}, we obtain
\begin{equation}\label{the3-5}
\begin{split} 
I_1&\leq \left(\int_0^t(t-s)^{\frac{(\alpha-1)p}{p-1}}ds\right)^{p-1}\mathbb{E}\left(\int_0^t\left|f(s,x_k(s))\right|^pds\right)\\
&\leq 2^{p-1}\cdot T^{\alpha p-1}\cdot \int_0^t \psi\left(\mathbb{E}\left|x_k(s)\right|^p\right)ds+2^{p-1}L\cdot T^{\alpha p},
\end{split}
\end{equation}
where the concavity of $\psi$ is used.

For $I_2$, by using the Burkholder-Davis-Gundy inequality, we obtain
\begin{equation}\label{the3-6}
\begin{split} 
I_2&\leq C_p\mathbb{E}\left(\left(\int_0^t(t-s)^{2\alpha-2}\Vert g(s,x_k(s))\Vert^2 ds\right)^\frac{p}{2}\right)\\
&\leq C_pT^{\frac{p}{2}-1}\mathbb{E}\left(\int_0^t (t-s)^{(\alpha-1)p}\Vert g(s,x_k(s))\Vert^p ds\right)\\
&\leq C_p2^{p-1}T^{\frac{p}{2}-1}\int_0^t (t-s)^{(\alpha-1)p}\psi\left(\mathbb{E}\left|x_k(s)\right|^p\right) ds+C_p2^{p-1}L\cdot T^{\left(\alpha-\frac{1}{2}\right)p}.
\end{split}
\end{equation}
We then have from (\ref{the3-4})
\begin{equation}\label{the3-7}
	\begin{split} 
\mathbb{E}\left|x_{k+1}(t)\right|^p&\leq 3^{p-1}\mathbb{E}|x_0|^p+C_{p,L,\alpha,T}+C_{p,\alpha,T}\int_{0}^t \left(1+(t-s)^{(\alpha-1)p}\right)\psi\left(\mathbb{E}\left|x_k(s)\right|^p\right)  ds.
\end{split}
\end{equation}
	
Define $X_n:=\max\limits_{0\leq k\leq n+1}\mathbb{E}|x_k|^p\leq \max\limits_{0\leq k\leq n}\mathbb{E}|x_{k+1}|^p+\mathbb{E}|x_0|^p$. Hence, we have
\begin{equation}\label{the3-8}
	\begin{split} 
\mathbb{E}\left|X_n \right|&\leq (3^{p-1}+1)\mathbb{E}|x_0|^p+C_{p,L,\alpha,T}+C_{p,\alpha,T}\int_{0}^t \left(1+(t-s)^{(\alpha-1)p}\right)\psi\left(\mathbb{E}\left|X_n \right|\right)  ds.
	\end{split}
\end{equation}

Since $\psi$ is a concave and non-decreasing, there exists a positive number $C$ such that $\psi(x)\leq C(1+x)$, it follows that
\begin{equation}\label{the3-9}
\begin{split} 
\mathbb{E}\left|X_n \right|\leq (3^{p-1}+1)\mathbb{E}|x_0|^p+C_{p,L,\alpha,T}+C_{p,\alpha,T}\int_{0}^t \left(1+(t-s)^{(\alpha-1)p}\right)\mathbb{E}\left|X_n \right|  ds.
\end{split}
\end{equation}
By using the Gronwall inequality, it is easy to obtain
\begin{equation}\label{the3-10}
	\begin{split} 
		\mathbb{E}\left|X_n \right|\leq C_{p,L,\alpha,x_0,T}.
	\end{split}
\end{equation}

\textbf{Cauchy sequence and existence.} Next, we prove that $x_k$ is a Cauchy sequence in $L^p\left(\Omega;C\left([0,T];R^n\right)\right)$. For $m, k\geq 1$, we have
\begin{equation}\label{the3-11}
	\begin{split} 
\mathbb{E}|x_{m+1}(t)-x_{k+1}(t)|^p&\leq \frac{2^{p-1}}{\Gamma(\alpha)^p}\mathbb{E}\left|\int_0^t (t-s)^{\alpha-1}\left(f(s,x_{m}(s))-f(s,x_{k}(s))\right) ds\right|^p\\
&+\frac{2^{p-1}}{\Gamma(\alpha)^p}\mathbb{E}\left|\int_0^t (t-s)^{\alpha-1}\left(g(s,x_{m}(s))-g(s,x_{k}(s))\right) dB_s\right|^p\\
&=:\frac{2^{p-1}}{\Gamma(\alpha)^p}J_1+\frac{2^{p-1}}{\Gamma(\alpha)^p}J_2.
	\end{split}
\end{equation}
With the help of Assumption \ref{assumption1}, we estimate 
\begin{equation}\label{the3-12}
	\begin{split} 
J_1&\leq \left(\int_0^t(t-s)^{\frac{(\alpha-1)p}{p-1}}ds\right)^{p-1}\mathbb{E}\left(\int_0^t\left|f(s,x_m(s))-f(s,x_k(s))\right|^pds\right)\\
&\leq T^{\alpha p-1}\int_{0}^t\mathbb{E}\left(\psi(|x_m(s)-x_k(s)|^p)\right) ds\\
&\leq  C_T\int_{0}^t\psi\left(\sup\limits_{0\leq r\leq s}\mathbb{E}|x_m(r)-x_k(r)|^p\right)ds
	\end{split}
\end{equation}
and 
\begin{equation}\label{the3-13}
\begin{split} 
J_2&\leq C_p\int_0^t(t-s)^{(\alpha-1)p}\psi\left(\mathbb{E}|x_m(s)-x_k(s)|^p\right)ds\\
&\leq C_p\int_0^t(t-s)^{(\alpha-1)p}\psi\left(\sup\limits_{0\leq r\leq s}\mathbb{E}|x_m(r)-x_k(r)|^p\right)ds
\end{split}
\end{equation}
where the non-decreasing of $\psi$ is used.

Let $w(s)=\sup\limits_{0\leq r\leq s}\mathbb{E}\left|x_m(r)-x_k(r)\right|^p$. By (\ref{the3-11}), (\ref{the3-12}) and (\ref{the3-13}), we have
\begin{equation}\label{the3-14}
	\begin{split} 
		\mathbb{E}|x_{m+1}(t)-x_{k+1}(t)|^p&\leq C\int_{0}^t \left(1+(t-s)^{(\alpha-1)p}\right)\psi\left(w(s)\right) ds.
	\end{split}
\end{equation}
	
For $0\leq t_0\leq t$, note that
\begin{equation}\label{the3-15}
\begin{split}
&\int_0^t(1+(t-s)^{(\alpha-1)p})\psi\left(w(s)\right)ds-\int_0^{t_0}(1+(t_0-s)^{(\alpha-1)p})\psi\left(w(s)\right)ds\\
&=\int_0^t \left(1+y^{(\alpha-1)p}\right)\psi\left(w(t-y)\right)dy-\int_0^{t_0}\left(1+y^{(\alpha-1)p}\right)\psi\left(w(t_0-y)\right)dy\\
&=\int_0^{t_0} \left(1+y^{(\alpha-1)p}\right)\left[\psi\left(w(t-y)\right)-\psi\left(w(t_0-y)\right)\right]dy+\int_{t_0}^t\left(1+y^{(\alpha-1)p}\right)\psi\left(w(t-y)\right)dy\\
&\geq 0,
\end{split}
\end{equation}
since $\psi$ is non-decreasing. We thus obtain
\begin{equation}\label{the3-16}
\begin{split} 
\sup\limits_{0\leq r\leq t}\mathbb{E}|x_{m+1}(r)-x_{k+1}(r)|^p&\leq C\int_{0}^t \left(1+(t-s)^{(\alpha-1)p}\right)\psi\left(w(s)\right) ds
\end{split}
\end{equation}
	
Taking limit as $m,k\rightarrow \infty$ and using the Fatou lemma, we obtain for every $\varepsilon>0$,
\begin{equation}\label{the3-17}
\begin{split} 
\lim\limits_{m,k\rightarrow\infty} w(t)\leq \varepsilon+C\int_{0}^t \left(1+(t-s)^{(\alpha-1)p}\right)\psi\left(\lim\limits_{m,k\rightarrow\infty}w(s)\right) ds.
\end{split}
\end{equation}
By the Bihari inequality yields
\begin{equation}\label{the3-18}
	\begin{split} 
\lim\limits_{m,k\rightarrow\infty}w(t)\leq W^{-1}\left(W(\varepsilon)+C\right),
	\end{split}
\end{equation}
where $W(\varepsilon)+C\in Dom(W^{-1})$, and $W(r)=\int_1^r\frac{1}{\psi(s)}ds$. 

By Assumption \ref{assumption1}, we get $\lim\limits_{\varepsilon\rightarrow 0}W(\varepsilon)=-\infty$ and $Dom(W^{-1})=(-\infty,W(\infty))$. Taking $\varepsilon\rightarrow 0$ gives
\begin{equation}\label{the3-19}
	\begin{split} 
\mathbb{E}\left(\sup\limits_{0\leq s\leq T}|x_m(s)-x_k(s)|^p\right)\rightarrow 0, \ m,k\rightarrow\infty,
	\end{split}
\end{equation}
indicating that ${x_k}$ is a Cauchy sequence in $L^p\left(\Omega;C\left([0,T];R^n\right)\right)$. We denote the limit by $x(t)$. Let $m\rightarrow \infty$ in (\ref{the3-19}), we have
\begin{equation}\label{the3-20}
	\begin{split} 
		\lim\limits_{k\rightarrow\infty}\mathbb{E}\left(\sup\limits_{0\leq s\leq T}|x(s)-x_k(s)|^p\right)\rightarrow 0.
	\end{split}
\end{equation}
Furthermore, as $k\rightarrow\infty$ in (\ref{the3-10}), we obtain
\begin{equation}\label{the3-21}
	\begin{split} 
		\sup\limits_{0\leq t\leq T}\mathbb{E}\left|x(t) \right|^p\leq C_{p,L,\alpha,x_0,T}.
	\end{split}
\end{equation}
	
\textbf{Uniqueness.} Let $x_1$ and $x_2$ be two solutions for (\ref{the1-1}) on the same probability space with $x_1(0)=x_2(0)$. By using a similar estimation method as above, we can get 
\begin{equation}\label{the3-22}
\begin{split} 
\mathbb{E}\left(\sup\limits_{0\leq s\leq t}|x_1(s)-x_2(s)|^p\right)\leq C\int_{0}^t \left(1+(t-s)^{(\alpha-1)p}\right)\psi\left(\mathbb{E}\left(\sup\limits_{0\leq r\leq s}\left|x_1(r)-x_2(r)\right|^p\right)\right) ds.
\end{split}
\end{equation}
The Bihari inequality implies that $x_1(t)=x_2(t)$, $t\in[0,T]$, $\mathbb{P}-a.s$.

The proof is completed.
\end{proof}

\section{Homogenization principle}
\label{section4}
\subsection{Main result}
In this section, we shall establish the homogenization principle $(\ref{the2-4})$ in the sense of mean square ($p=2$). First, fix the variable $x_\epsilon$ in $f\left(\frac{t}{\epsilon},x_\epsilon\right)$ and $g\left(\frac{t}{\epsilon},x_\epsilon\right)$ and then homogenize $t$:
\begin{equation}\label{the4-1}
\begin{split}
&\overline{f}(x_\epsilon(t))=\lim\limits_{T_1\rightarrow\infty}\frac{1}{T_1}\int_0^{T_1}f\left(\frac{s}{\epsilon},x_\epsilon(t)\right)ds,\\
&\overline{g}(x_\epsilon(t))=\lim\limits_{T_1\rightarrow\infty}\frac{1}{T_1}\int_0^{T_1}g\left(\frac{s}{\epsilon},x_\epsilon(t)\right)ds.\\
\end{split}
\end{equation}
Therefore, (\ref{the2-4}) can be transformed into
\begin{equation}\label{the4-2}
\begin{split}
x_\epsilon(t)&=x_0+\frac{1}{\Gamma(\alpha)}\int_0^t(t-s)^{\alpha-1}\overline{f}(x_\epsilon(s))ds+\frac{1}{\Gamma(\alpha)}\int_0^t(t-s)^{\alpha-1}\overline{g}(x_\epsilon(s))dB_s\\
&+\frac{1}{\Gamma(\alpha)}\int_0^t(t-s)^{\alpha-1}\left(f(s/\epsilon,x_\epsilon(s))-\overline{f}(x_\epsilon(s))\right)ds\\
&+\frac{1}{\Gamma(\alpha)}\int_0^t(t-s)^{\alpha-1}\left(g(s/\epsilon,x_\epsilon(s))-\overline{g}(x_\epsilon(s))\right)dB_s.
\end{split}
\end{equation}

In the following Theorem \ref{theorem4-4}, we will prove that the $p$th moment of the last two terms in (\ref{the4-2}) tend to $0$ as $\epsilon\rightarrow 0$. We define the homogenization equation for the approximation of $x_\epsilon$ by neglecting this remainder
\begin{equation}\label{the4-3}
\begin{split}
y_\epsilon(t)&=x_0+\frac{1}{\Gamma(\alpha)}\int_0^t(t-s)^{\alpha-1}\overline{f}(y_\epsilon(s))ds+\frac{1}{\Gamma(\alpha)}\int_0^t(t-s)^{\alpha-1}\overline{g}(y_\epsilon(s))dB_s.
\end{split}
\end{equation}

Suppose that $f\left(\frac{t}{\epsilon},x_\epsilon(t)\right)$ and $g\left(\frac{t}{\epsilon},x_\epsilon(t)\right)$ satisfy Assumption \ref{assumption1}. In addition, we need to assume that $\overline{f}(x_\epsilon(s))$ and $\overline{g}(x_\epsilon(s))$ satisfy the following homogenization conditions.

\begin{assumption}
	\label{assumption2}
For any $T_1\in(0,T]$ and $x\in\mathbb{R}^n$, there exists positive bounded functions $\gamma_1(t)$ and $\gamma_2(t)$ such that
\begin{equation}\label{the4-4}
	\begin{split}
&\left|\frac{1}{(T_1)^\alpha}\int_{0}^{T_1}\left(T_1-s\right)^{\alpha-1}\left(f(s,x)-\overline{f}(x)\right)  ds\right|^2\leq \gamma_1\left(T_1\right)\left(1+\left|x\right|^2\right),\\
&\frac{1}{(T_1)^{2\alpha-1}}\int_{0}^{T_1}(T_1-s)^{2(\alpha-1)}\left\Vert g(s,x)-\overline{g}(x)\right\Vert ^2  ds\leq \gamma_2\left(T_1\right)\left(1+\left|x\right|^2\right),
\end{split}
\end{equation}
where $\lim\limits_{T_1\rightarrow\infty}\gamma_1\left(T_1\right)=0$, \ $\lim\limits_{T_1\rightarrow\infty}\gamma_2\left(T_1\right)=0$.
\end{assumption}

\begin{remark}
Note that 
\begin{equation}
	\begin{split}
&\left|\frac{1}{(T_1)^\alpha}\int_{0}^{T_1}\left(T_1-s\right)^{\alpha-1}\left(f(s,x)-\overline{f}(x)\right)  ds\right|^2\leq \frac{1}{T_1}\int_0^{T_1} \left|f(s,x)-\overline{f}(x)\right|^2 ds,
	\end{split}
\end{equation}
which indicates that (\ref{the4-4}) is weaker than the widely used homogenization assumption:
\begin{equation}\label{the4-05}
	\begin{split}
\frac{1}{T_1}\int_0^{T_1} \left|f(s,x)-\overline{f}(x)\right|^2 ds\leq  \gamma_1\left(T_1\right)\left(1+\left|x\right|^2\right).
	\end{split}
\end{equation}
We would like to point out that (\ref{the4-05}) is not applicable to some long-term oscillatory systems (see an example in Section \ref{section4-2}).
\end{remark}

\begin{lemma}
	\label{lemma4-3}
The coefficients $\overline{f}(y_\epsilon(s))$ and $\overline{g}(y_\epsilon(s))$ in (\ref{the4-3}) satisfy Assumption \ref{assumption1}, thus its solution is unique. Furthermore, we have the following regularity estimate
\begin{equation}\label{the4-5}
	\begin{split}
\mathbb{E}\left(\sup\limits_{0\leq t\leq T}|y_\epsilon(t)|^2\right)\leq C_{L43}.
	\end{split}
\end{equation}
\end{lemma}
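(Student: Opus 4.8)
The assertion has three parts: (i) the homogenized coefficients $\overline{f}$ and $\overline{g}$ satisfy Assumption~\ref{assumption1} with the \emph{same} modulus $\psi$ and constant $L$; (ii) hence \eqref{the4-3} is well posed; and (iii) the solution enjoys the stated $L^2$ bound. For (i), recall from \eqref{the4-1} that $\overline{f}(x)$ and $\overline{g}(x)$ are Ces\`aro limits of $f(\cdot/\epsilon,x)$ and $g(\cdot/\epsilon,x)$. Since $r\mapsto|r|^p$ is convex and $T_1^{-1}ds$ is a probability measure on $[0,T_1]$, Jensen's inequality and \eqref{the2-5} give, for every $T_1>0$,
\[
\left|\frac{1}{T_1}\int_0^{T_1}\bigl(f(s/\epsilon,x_1)-f(s/\epsilon,x_2)\bigr)ds\right|^p\le \frac{1}{T_1}\int_0^{T_1}\bigl|f(s/\epsilon,x_1)-f(s/\epsilon,x_2)\bigr|^p ds\le \psi\bigl(|x_1-x_2|^p\bigr),
\]
the last step using that the integrand is independent of $s$. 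Letting $T_1\to\infty$ yields $|\overline{f}(x_1)-\overline{f}(x_2)|^p\le\psi(|x_1-x_2|^p)$, and identically $\|\overline{g}(x_1)-\overline{g}(x_2)\|^p\le\psi(|x_1-x_2|^p)$; taking $x_2=0$ and using \eqref{the2-6} gives $|\overline{f}(0)|^p\vee\|\overline{g}(0)\|^p\le L$. Since $\int_{0^+}\psi(s)^{-1}ds=+\infty$ forces $\psi(0)=0$ (otherwise $1/\psi$ is bounded near $0$), the bound above also shows $\overline{f}$ and $\overline{g}$ are (uniformly) continuous, so they are measurable continuous functions obeying Assumption~\ref{assumption1}.

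For (ii), observe that \eqref{the4-3} is precisely \eqref{the1-1}/\eqref{the2-4} with $\epsilon=1$ and with the autonomous coefficients $\overline{f},\overline{g}$; by part (i), and since $p=2$ with $\alpha\in(\tfrac12,1)\subset(1-\tfrac1p,1]$, Theorem~\ref{theorem3-1} applies and yields existence, uniqueness and $y_\epsilon\in L^2(\Omega;C([0,T];\mathbb{R}^n))$. For (iii), I would re-run the moment-estimate step of Theorem~\ref{theorem3-1}, but keeping the supremum \emph{inside} the expectation. Writing $m(t):=\mathbb{E}\bigl(\sup_{0\le r\le t}|y_\epsilon(r)|^2\bigr)$, I split $|y_\epsilon(r)|^2$ into the initial, drift and diffusion contributions. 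For the drift convolution, H\"older's inequality (valid since $2\alpha-1>0$) bounds $\sup_{r\le t}\bigl|\int_0^r(r-s)^{\alpha-1}\overline{f}(y_\epsilon(s))ds\bigr|^2$ by $\tfrac{T^{2\alpha-1}}{2\alpha-1}\int_0^t|\overline{f}(y_\epsilon(s))|^2 ds$, and, after taking expectations and using Remark~\ref{remark2-5}, Jensen and the monotonicity/concavity of $\psi$, by $C_T\int_0^t\bigl(1+\psi(m(s))\bigr)ds$; the diffusion convolution is handled by the Burkholder--Davis--Gundy inequality as for $I_2$ in \eqref{the3-6}, producing a term $\le C\int_0^t(t-s)^{2\alpha-2}\bigl(1+\psi(m(s))\bigr)ds$. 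Collecting terms gives $m(t)\le C_{x_0,L,\alpha,T}+C_{\alpha,T}\int_0^t\bigl(1+(t-s)^{2\alpha-2}\bigr)\psi(m(s))ds$; since $m$ is already finite by (ii), linearising $\psi(x)\le C(1+x)$ and invoking the (weakly singular) Gronwall inequality exactly as in \eqref{the3-9}--\eqref{the3-10} yields $m(T)\le C_{L43}$, with $C_{L43}$ depending only on $L,\alpha,x_0,T$ and, in particular, independent of $\epsilon$.

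The main obstacle is the running supremum of the stochastic convolution $r\mapsto\int_0^r(r-s)^{\alpha-1}\overline{g}(y_\epsilon(s))dB_s$: because the kernel $(r-s)^{\alpha-1}$ depends on the upper limit $r$, this process is not a martingale, so a Doob/Burkholder maximal inequality cannot be applied to it directly. For $\alpha>\tfrac12$ I would circumvent this by the factorization (stochastic Fubini) identity, which rewrites the fractional stochastic convolution as an ordinary time integral of a genuine martingale convolution, to which Burkholder--Davis--Gundy does apply, with a subsequent H\"older estimate absorbing the auxiliary kernel; alternatively, one may appeal to the known maximal estimate for fractional stochastic convolutions in the regime $\alpha>\tfrac12$. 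Everything else is a routine repetition of the arguments of Sections~\ref{section2}--\ref{section3}.
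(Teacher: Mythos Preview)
Your proof is correct and follows the same route as the paper: establish Assumption~\ref{assumption1} for $\overline{f},\overline{g}$ via Jensen on the Ces\`aro averages (the paper phrases this as H\"older), then invoke Theorem~\ref{theorem3-1}. The paper is in fact briefer on part~(iii): after verifying \eqref{the4-6}--\eqref{the4-8} it simply concludes ``so we can get \eqref{the4-5}'', appealing implicitly to the $L^2(\Omega;C([0,T];\mathbb{R}^n))$-membership already furnished by Theorem~\ref{theorem3-1}, rather than re-deriving a maximal estimate via the factorization argument you outline.
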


\begin{proof}
For $x,y\in \mathbb{R}^n$, by the H\"{o}lder inequality and (\ref{the4-1}), we have
\begin{equation}\label{the4-6}
\begin{split}
|\overline{f}(x)-\overline{f}(y)|^2&\leq \lim\limits_{T_1\rightarrow\infty} \frac{1}{T_1}\int_0^{T_1}\left|f\left(\frac{s}{\epsilon},x_\epsilon(t)\right)-f\left(\frac{s}{\epsilon},y_\epsilon(t)\right)\right|^2 ds\\
&\leq \lim\limits_{T_1\rightarrow\infty} \frac{1}{T_1}\int_0^{T_1}\psi(\left|x_\epsilon(t)-y_\epsilon(t)\right|^2) ds\\
&=\psi(\left|x_\epsilon(t)-y_\epsilon(t)\right|^2),
\end{split}
\end{equation}
and 
\begin{equation}\label{the4-7}
\begin{split}
\left\Vert\overline{g}(x)-\overline{g}(y)\right\Vert^2&\leq \psi(\left|x_\epsilon(t)-y_\epsilon(t)\right|^2).
\end{split}
\end{equation}
Moreover, by (\ref{the2-6})
\begin{equation}\label{the4-8}
	\begin{split}
|\overline{f}(0)|^2&\leq \lim\limits_{T_1\rightarrow\infty}\frac{1}{T_1}\int_0^{T_1} f\left(\frac{s}{\epsilon}, 0\right)ds\leq L.
	\end{split}
\end{equation}
Similarly, we can obtain $\Vert\overline{g}(0)\Vert^2$ is also bounded. Both $\overline{f}$ and $\overline{g}$ satisfy the conditions in Assumption \ref{assumption1}, so we can get $(\ref{the4-5})$.
\end{proof}

\begin{theorem}
\label{theorem4-4}
Suppose that Assumptions \ref{assumption1} and \ref{assumption2} hold. For $\alpha\in\left(\frac{1}{2},1\right)$, then
\begin{equation}\label{the4-9}
	\begin{split}
		\lim_{\epsilon\rightarrow 0}\mathbb{E}\left|x_{\epsilon}(t)-y_{\epsilon}(t)\right|^2=0,
	\end{split}
\end{equation}
for $t\in(0,T]$.
\end{theorem}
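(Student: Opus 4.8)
The plan is to subtract \eqref{the4-3} from \eqref{the4-2} and estimate $\mathbb{E}|x_\epsilon(t)-y_\epsilon(t)|^2$ by splitting the difference into four contributions: the drift term involving $\overline{f}(x_\epsilon)-\overline{f}(y_\epsilon)$, the diffusion term involving $\overline{g}(x_\epsilon)-\overline{g}(y_\epsilon)$, and the two ``remainder'' terms
\begin{equation}\label{rem-terms}
\begin{split}
R_\epsilon^f(t)&=\frac{1}{\Gamma(\alpha)}\int_0^t(t-s)^{\alpha-1}\left(f(s/\epsilon,x_\epsilon(s))-\overline{f}(x_\epsilon(s))\right)ds,\\
R_\epsilon^g(t)&=\frac{1}{\Gamma(\alpha)}\int_0^t(t-s)^{\alpha-1}\left(g(s/\epsilon,x_\epsilon(s))-\overline{g}(x_\epsilon(s))\right)dB_s.
\end{split}
\end{equation}
First I would bound the first two contributions exactly as in Theorem \ref{theorem3-1}: using H\"older on the drift and Burkholder--Davis--Gundy on the diffusion, together with the facts (from Lemma \ref{lemma4-3}) that $\overline{f},\overline{g}$ satisfy Assumption \ref{assumption1}, one gets a term of the form $C\int_0^t(1+(t-s)^{2\alpha-2})\psi\!\left(\sup_{0\le r\le s}\mathbb{E}|x_\epsilon(r)-y_\epsilon(r)|^2\right)ds$, which is integrable near $s=t$ precisely because $\alpha>\tfrac12$.

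The crux is showing $\mathbb{E}|R_\epsilon^f(t)|^2\to 0$ and $\mathbb{E}|R_\epsilon^g(t)|^2\to 0$ as $\epsilon\to0$. For $R_\epsilon^g$, BDG gives $\mathbb{E}|R_\epsilon^g(t)|^2\le C\,\mathbb{E}\int_0^t(t-s)^{2\alpha-2}\|g(s/\epsilon,x_\epsilon(s))-\overline{g}(x_\epsilon(s))\|^2ds$, and I would like to apply Assumption \ref{assumption2} to the inner integral. The obstacle is the rescaling: Assumption \ref{assumption2} is stated for the ``frozen'' integrand $g(s,x)$ on an interval $[0,T_1]$ with the weight $(T_1-s)^{2\alpha-2}$, whereas here the oscillation is at rate $1/\epsilon$ and $x_\epsilon(s)$ varies with $s$. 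The natural device is the change of variables $s=\epsilon u$, so $\int_0^t(t-s)^{2\alpha-2}\|g(s/\epsilon,\cdot)-\overline{g}(\cdot)\|^2ds=\epsilon^{2\alpha-1}\int_0^{t/\epsilon}(t/\epsilon-u)^{2\alpha-2}\|g(u,\cdot)-\overline{g}(\cdot)\|^2du$; comparing with the assumed bound with $T_1=t/\epsilon$ yields a factor $t^{2\alpha-1}\gamma_2(t/\epsilon)(1+|x_\epsilon(s)|^2)$, and $\gamma_2(t/\epsilon)\to0$ as $\epsilon\to0$. To make this rigorous one must handle the $s$-dependence of $x_\epsilon(s)$ inside the oscillatory integral — the standard fix is a partition of $[0,t]$ into subintervals on which $x_\epsilon$ is frozen (a Khasminskii-type time discretization), controlling the error by the H\"older/moment regularity of $x_\epsilon$ and the local Lipschitz-type bound \eqref{the2-5}, then applying Assumption \ref{assumption2} on each piece and summing. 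I expect this freezing-and-reassembling step, and the bookkeeping of the singular kernel $(t-s)^{2\alpha-2}$ across the partition, to be the main technical obstacle; an analogous argument handles $R_\epsilon^f$ via H\"older and the first inequality in \eqref{the4-4}.

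Finally, combining all four estimates gives, with $w(t):=\sup_{0\le r\le t}\mathbb{E}|x_\epsilon(r)-y_\epsilon(r)|^2$,
\begin{equation}\label{final-gron}
\begin{split}
w(t)\le \eta(\epsilon)+C\int_0^t\left(1+(t-s)^{2\alpha-2}\right)\psi\!\left(w(s)\right)ds,
\end{split}
\end{equation}
where $\eta(\epsilon)\to0$ collects the remainder bounds (here I would also use the moment estimate \eqref{the3-2} with $p=2$ and \eqref{the4-5} to turn $1+|x_\epsilon(s)|^2$ into a constant after taking expectations). Monotonicity of the kernel in $t$ — exactly the computation \eqref{the3-15} — lets one pass from the pointwise bound to the bound on $w(t)$ itself, and then the Bihari inequality (Lemma \ref{lemma2-6}) with $c=\eta(\epsilon)$ gives $w(t)\le W^{-1}(W(\eta(\epsilon))+C\int_0^t(1+(t-s)^{2\alpha-2})ds)$. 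Letting $\epsilon\to0$, so $\eta(\epsilon)\to0$ and $W(\eta(\epsilon))\to-\infty$ by Assumption \ref{assumption1}, forces $w(t)\to0$, which is \eqref{the4-9}. One subtlety to flag: the $t$-monotonicity trick requires the singular-kernel integral to be finite, again guaranteed by $\alpha>\tfrac12$, and care is needed that the constant $C$ in \eqref{final-gron} does not depend on $\epsilon$.
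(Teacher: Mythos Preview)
Your outline matches the paper's proof in structure: split into pieces controlled by Assumption~\ref{assumption1} and pieces controlled by Assumption~\ref{assumption2}, then close with the Bihari inequality exactly as you describe. The differences are minor but worth noting. First, your triangle-inequality insertion is at $\overline{f}(x_\epsilon),\overline{g}(x_\epsilon)$, so your remainders $R_\epsilon^f,R_\epsilon^g$ carry $x_\epsilon$; the paper instead inserts $f(s/\epsilon,y_\epsilon),g(s/\epsilon,y_\epsilon)$, so its remainders (denoted $J_{12},J_{22}$) carry $y_\epsilon$, for which the sup-in-time bound \eqref{the4-5} of Lemma~\ref{lemma4-3} is immediately available. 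Second, and more substantively, the Khasminskii-type freezing you anticipate for the remainders is \emph{not} carried out in the paper: after the substitution $s=\epsilon w$ the paper directly applies Assumption~\ref{assumption2} with $T_1=t/\epsilon$ to the integral containing the time-dependent argument $y_\epsilon(\epsilon w)$ and simply replaces $|y_\epsilon(\epsilon w)|^2$ by its supremum over $[0,T]$ (see \eqref{the4-13} and \eqref{the4-16}), obtaining $J_{12}\le C\gamma_1(t/\epsilon)$ and $J_{22}\le C\gamma_2(t/\epsilon)$ in one stroke. Your plan to freeze and reassemble would address the pointwise-in-$x$ formulation of \eqref{the4-4} more carefully, but the paper's actual argument is shorter precisely because it skips this step.
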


\begin{proof}
For any $t\in[0,T]$, we have
\begin{equation}\label{the4-10}
\begin{split}
&\mathbb{E}\left(|x_{\epsilon}(t)-y_{\epsilon}(t)|^2\right)\\
&\leq \frac{2}{\Gamma(\alpha)^2}\mathbb{E}\left(\left|\int_0^t(t-s)^{\alpha-1}\left(f(s/\epsilon,x_{\epsilon}(s))-\overline f(y_{\epsilon}(s))\right)ds\right|^2\right)\\
&+\frac{2}{\Gamma(\alpha)^2}\mathbb{E}\left(\left|\int_0^t(t-s)^{\alpha-1}\left(g(s/\epsilon,x_{\epsilon}(s))-\overline g(y_{\epsilon}(s))\right)dB_s\right|^2\right)\\
&=J_1+J_2.
\end{split}
\end{equation}
	
For $J_1$, we have
\begin{equation}\label{the4-11}
\begin{split}
J_1&\leq \frac{4}{\Gamma(\alpha)^2}\mathbb{E}\left(\left|\int_0^t(t-s)^{\alpha-1}\left(f(s/\epsilon,x_{\epsilon}(s))-f(s/\epsilon,y_{\epsilon}(s))\right)ds\right|^2\right)\\
&+\frac{4}{\Gamma(\alpha)^2}\mathbb{E}\left(\left|\int_0^t(t-s)^{\alpha-1}\left(f(s/\epsilon,y_{\epsilon}(s))-\overline f(y_{\epsilon}(s))\right)ds\right|^2\right)\\
&=:J_{11}+J_{12}.
\end{split}
\end{equation}
	
By the Cauchy-Schwarz inequality and Assumption \ref{assumption1}, we deduce that
\begin{equation}\label{the4-12}
\begin{split}
J_{11}&\leq\frac{4}{\Gamma(\alpha)^2}\left(\int_0^t(t-s)^{2(\alpha-1)}ds\right)\cdot\mathbb{E}\left(\int_0^{t}\Big\vert f(s/\epsilon,x_{\epsilon}(s))-f(s/\epsilon,y_{\epsilon}(s))\Big\vert^2ds\right)\\
&\leq Ct^{2\alpha-1}\int_0^{t}\psi\left(\mathbb{E}\left(|x_\epsilon(s)-y_\epsilon(s)|^2\right)\right)ds.
\end{split}
\end{equation}
With the help of the homogenization Assumption \ref{assumption2} and Lemma \ref{lemma4-3}, we obtain
\begin{equation}\label{the4-13}
\begin{split}
J_{12}&\leq\frac{4}{\Gamma(\alpha)^2}\mathbb{E}\left(\left|\epsilon\int_0^{\frac{t}{\epsilon}}(t-\epsilon w)^{\alpha-1}\left(f(w,y_{\epsilon}(\epsilon w))-\overline f(y_{\epsilon}(\epsilon w))\right)dw\right|^2\right)\\
&\leq \frac{4t^{2\alpha}}{\Gamma(\alpha)^2}\mathbb{E}\left(\left|\left(\frac{\epsilon}{t}\right)^\alpha\int_0^{\frac{t}{\epsilon}}\left(\frac{t}{\epsilon}- w\right)^{\alpha-1}\left(f(w,y_{\epsilon}(\epsilon w))-\overline f(y_{\epsilon}(\epsilon w))\right)dw\right|^2\right)\\
&\leq \frac{4t^{2\alpha}}{\Gamma(\alpha)^2}\gamma_1\left(\frac{t}{\epsilon}\right)\left(1+\mathbb{E}\left(\sup\limits_{0\leq w\leq \frac{t}{\epsilon}}|y_\epsilon(\epsilon w)|^2\right)\right)\\
&\leq C\gamma_1\left(\frac{t}{\epsilon}\right).
\end{split}
\end{equation}
	
For $J_2$, we have
\begin{equation}\label{the4-14}
\begin{split}
J_2&\leq \frac{4}{\Gamma(\alpha)^2}\mathbb{E}\left(\Big\vert\int_0^t(t-s)^{\alpha-1}\left(g(s/\epsilon,x_{\epsilon}(s))-g(s/\epsilon,y_{\epsilon}(s))\right)dB_s\Big\vert^2\right)\\
&+\frac{4}{\Gamma(\alpha)^2}\mathbb{E}\left(\Big\vert\int_0^t(t-s)^{\alpha-1}\left(g(s/\epsilon,y_{\epsilon}(s))-\overline g(y_{\epsilon}(s))\right)dB_s\Big\vert^2\right)\\
&=:J_{21}+J_{22}.
\end{split}
\end{equation}
	
By using the It\^{o} isometric formula and the Cauchy-Schwarz inequality, we obtain
\begin{equation}\label{the4-15}
\begin{split}
J_{21}&\leq \frac{4}{\Gamma(\alpha)^2}\mathbb{E}\left(\int_0^t(t-s)^{2\alpha-2}\left\Vert g(s/\epsilon,x_{\epsilon}(s))-g(s/\epsilon,y_{\epsilon}(s)\right\Vert^2ds\right)\\
&\leq C\int_0^t(t-s)^{2\alpha-2}\psi\left(\mathbb{E}\left(|x_\epsilon(s)-y_\epsilon(s)|^2\right)\right)ds.
\end{split}
\end{equation}
For the term $J_{22}$, by following the estimation way in (\ref{the4-13}), we have
\begin{equation}\label{the4-16}
\begin{split}
J_{22}&\leq \frac{4}{\Gamma(\alpha)^2}\mathbb{E}\left(\int_0^t(t-s)^{2\alpha-2}\left\Vert g(s/\epsilon,y_{\epsilon}(s))-\overline g(y_{\epsilon}(s))\right\Vert^2ds\right)\\
&\leq \frac{4t^{2\alpha-1}}{\Gamma(\alpha)^2}\mathbb{E}\left(\left(\frac{\epsilon}{t}\right)^{2\alpha-1}\int_0^{\frac{t}{\epsilon}}\left(\frac{t}{\epsilon}-w\right)^{2(\alpha-1)}\left\Vert g(w,y_\epsilon(\epsilon w))-\overline{g}(y_\epsilon(\epsilon w))\right\Vert^2dw\right)\\
&\leq C\gamma_2\left(\frac{t}{\epsilon}\right)\left(1+\mathbb{E}\left(\sup\limits_{0\leq w\leq \frac{t}{\epsilon}}|y_\epsilon(\epsilon w)|^2\right)\right)\\
&\leq C\gamma_2\left(\frac{t}{\epsilon}\right).
\end{split}
\end{equation}
	
Taking (\ref{the4-10})-(\ref{the4-16}) into consideration, we obtain
\begin{equation}\label{the4-19}
\begin{split}
\mathbb{E}\left(|x_{\epsilon}(t)-y_{\epsilon}(t)|^2\right)&\leq C\int_0^{t}\left[t^{2\alpha -1}+(t-s)^{2(\alpha-1)}\right]\psi\left(\mathbb{E}\left(|x_\epsilon(\tau)-y_\epsilon(\tau)|^2\right)\right)ds\\
&+C \left(\gamma_1\left(\frac{t}{\epsilon}\right)+\gamma_2\left(\frac{t}{\epsilon}\right)\right)
\end{split}
\end{equation}

Applying the Bihari inequality it is not difficult to obtain	
\begin{equation}\label{the4-20}
\begin{split}
&\mathbb{E}\left(\sup\limits_{0\leq t\leq T}|x_{\epsilon}(t)-y_{\epsilon}(t)|^p\right)\leq W^{-1}\left(W(q)+CT^{2\alpha}+CT^{2\alpha-1}\right),
\end{split}
\end{equation}
where
\begin{equation}\label{the4-21}
\begin{split}
&W(r)=\int_1^r\frac{1}{\psi(s)}ds, \quad q=C\left[\gamma_1\left(\frac{t}{\epsilon}\right)+ \gamma_2\left(\frac{t}{\epsilon}\right)\right].
\end{split}
\end{equation}
	
Let $\epsilon\rightarrow 0$, note that $\lim\limits_{\epsilon\rightarrow 0}\left(\gamma_1\left(\frac{t}{\epsilon}\right)+\gamma_2\left(\frac{t}{\epsilon}\right)\right)=0$. We then obtain $W(-\infty)\rightarrow 0$ and the desired result (\ref{the4-9}).
\end{proof}

\begin{remark}
\label{remark4-5}
From Theorem \ref{theorem4-4}, it can be seen that $\mathbb{E}\left(|x_\epsilon(t)-y_\epsilon(t)|^2\right)\lesssim \epsilon^{\mu} \ (\mu>0)$ as $\epsilon\rightarrow 0$. The convergence index $\mu$ is related to the function $\gamma_1$ and $\gamma_2$.
\end{remark}

\subsection{An illustrative example}
\label{section4-2}
We would like to point out that the homogenization assumption \ref{assumption2} proposed is applicable to some long-term oscillatory systems. Consider the following $f$ and $g$:
\begin{equation}\label{the4-22}
\begin{split}
f\left(\frac{t}{\epsilon},x_\epsilon\right)=2\cos^2\left(\frac{t}{\epsilon}\right)\sin(x_\epsilon(t)), \quad  g\left(\frac{t}{\epsilon},x_\epsilon\right)=\left(e^{-\frac{t}{\epsilon}}+1\right)x_\epsilon(t).
\end{split}
\end{equation}
We can see that the corresponding system (\ref{the1-1}) exhibits oscillatory behavior with time. By (\ref{the4-1}), for frozen slow component $x_\epsilon(t)$, we can compute
\begin{equation}\label{the4-23}
\begin{split}
&\overline{f}(x_\epsilon(t))=\lim\limits_{T_1\rightarrow\infty}\frac{1}{T_1}\int_0^{T_1}2\cos^2\left(\frac{s}{\epsilon}\right)\sin\left(x_\epsilon(t)\right)ds=\sin\left(x_\epsilon(t)\right),\\
&\overline{g}(x_\epsilon(t))=\lim\limits_{T_1\rightarrow\infty}\frac{1}{T_1}\int_0^{T_1}\left(e^{-\frac{s}{\epsilon}}+1\right)x_\epsilon(t)ds=x_\epsilon(t).
\end{split}
\end{equation}
We next verify that $\overline{f}(x_\epsilon(t))$ and $\overline{g}(x_\epsilon(t))$ satisfy the homogenization condition (\ref{the4-4}). For $\forall \delta\in(0,T_1)$, we derive

\begin{equation}
\begin{split}
&\left|\frac{1}{(T_1)^\alpha}\int_0^{T_1}(T_1-s)^{\alpha-1}\left(f(s,x_\epsilon)-\overline{f}(x_\epsilon)\right)ds\right|^2=\left|\frac{1}{(T_1)^\alpha}\int_0^{T_1}(T_1-s)^{\alpha-1}\cos(2s)\sin\left(x_\epsilon\right)dt \right|^2\\
&\leq \left(2\left|\frac{1}{(T_1)^\alpha}\int_0^{T_1-\delta}(T_1-s)^{\alpha-1}\cos(2s)dt \right|^2+2\left|\frac{1}{(T_1)^\alpha}\int_{T_1-\delta}^{T_1}(T_1-s)^{\alpha-1}dt \right|^2\right)\left(1+|x_\epsilon|^2\right)\\
&\leq \left(\frac{\delta^{2\alpha-2}}{(T_1)^{2\alpha}}+\frac{2\delta^{2\alpha}}{\alpha^2(T_1)^{2\alpha}}+\left|\frac{1-\alpha}{(T_1)^\alpha}\int_0^{T_1-\delta}\left(T_1-s\right)^{\alpha-2}\sin(2s)ds\right|^2\right)\left(1+|x_\epsilon|^2\right)\\
&\leq \left(\frac{2\delta^{2\alpha-2}}{(T_1)^{2\alpha}}+\frac{2\delta^{2\alpha}}{\alpha^2(T_1)^{2\alpha}}+\frac{1}{(T_1)^{2}}\right)\left(1+|x_\epsilon|^2\right)=:\gamma_1(T_1)\left(1+|x_\epsilon|^2\right).
\end{split}
\end{equation}
By using a similar method, we can also obtain
\begin{equation}
\begin{split}
&\frac{1}{(T_1)^{2\alpha-1}}\int_0^{T_1}(T_1-s)^{2\alpha-2}\left\Vert g(s,x_\epsilon)-\overline{g}(x_\epsilon)\right\Vert^2 ds\\
&\lesssim \left(\frac{1}{T_1}+\frac{\delta^{2\alpha-1}}{T^{2\alpha-1}}\right)(1+|x_\epsilon|^2)=:\gamma_2(T_1)(1+|x_\epsilon|^2).
\end{split}
\end{equation}
Since $\lim\limits_{T_1\rightarrow \infty}\gamma_1(T_1)=\lim\limits_{T_1\rightarrow \infty}\gamma_2(T_1)=0$, the simplified homogenization equation is
\begin{equation}
\begin{split}
y_\epsilon(t)&=x_\epsilon(0)+\frac{1}{\Gamma(\alpha)}\int_0^t (t-s)^{\alpha-1}\sin\left(y_\epsilon(s)\right)ds+\frac{1}{\Gamma(\alpha)}\int_0^t (t-s)^{\alpha-1}y_\epsilon(s)dB_s.\\
\end{split}
\end{equation}

\section{Numerical analysis}
\label{section5}
In this section, we derive an Euler-Maruyama numerical scheme for the non-autonomous system (\ref{the2-4}) and perform a convergence analysis. The numerical error of the corresponding autonomous system (\ref{the4-3}) is also obtained. This is beneficial for demonstrating the numerical advantages of homogenization in practical application problems. We consider convergence in the sense of mean square ($p=2$). Furthermore, we need to impose the Lipschitz condition $f$ and $g$.

\begin{assumption}
\label{assumption3}
There exists a positive constant $K$ such that $f$ and $g$ satisfies
\begin{equation}
	\begin{split}
\left|f(t_1,x_1)-f(t_2,x_2)\right|\vee \Vert g(t_1,x_1)-g(t_2,x_2)\Vert \leq K(|t_1-t_2|+|x_1-x_2|).
	\end{split}
\end{equation}
\end{assumption}

\subsection{Euler-Maruyama scheme}
We split the time interval $[0,T]$ into subintervals of equal size. Define $x_n=x(t_n)$, $t_n=n\Delta t$ \ $(0\leq n\leq N)$, where $\Delta t=T/N$ is the time step. The Euler-Maruyama scheme is used to discretize equation (\ref{the2-4}):
\begin{equation}\label{the5-1}
	\begin{split}
x_n&=x_0+\frac{1}{\Gamma(\alpha)}\sum\limits_{i=0}^{n-1}\int_{t_i}^{t_{i+1}}(t_n-s)^{\alpha-1}f\left(\frac{s}{\epsilon},x(s)\right)ds+\frac{1}{\Gamma(\alpha)}\sum\limits_{i=0}^{n-1}\int_{t_i}^{t_{i+1}}(t_n-s)^{\alpha-1}g\left(\frac{s}{\epsilon},x(s)\right)dB_s\\
&\approx x_0+\frac{1}{\Gamma(\alpha)}\sum\limits_{i=0}^{n-1}f\left(\frac{t_i}{\epsilon},x(t_i)\right)\int_{t_i}^{t_{i+1}}(t_n-s)^{\alpha-1}ds+\frac{1}{\Gamma(\alpha)}\sum\limits_{i=0}^{n-1}(t_n-t_i)^{\alpha-1}g\left(\frac{t_i}{\epsilon},x(t_i)\right)\int_{t_i}^{t_{i+1}}dB_s\\
&=x_0+\frac{1}{\Gamma(\alpha+1)}(\Delta t)^\alpha\sum\limits_{i=0}^{n-1}\left[(n-i)^\alpha-(n-i-1)^\alpha\right]f\left(\frac{t_i}{\epsilon},x(t_i)\right)\\
&+\frac{1}{\Gamma(\alpha)}(\Delta t)^{\alpha-1}\sum\limits_{i=0}^{n-1}(n-i)^{\alpha-1}g\left(\frac{t_i}{\epsilon},x(t_i)\right)\Delta B_i,
	\end{split}
\end{equation}
where $\Delta B_i=B(t_{i+1})-B(t_{i})\sim N(0,\Delta t)$ is a Gaussian distribution.

\subsection{Error analysis}
We first introduce an auxiliary continuous time stochastic process $z(t)$ on $[0,T]$ to help obtain the strong convergence of Scheme (\ref{the5-1}). Define the step function $\underline{s}$ such that $\underline{s}:=t_n$ for $s\in(t_n,t_{n+1}]$ and $0\leq n\leq N-1$,
\begin{equation}\label{the5-2}
\begin{split}
z(t)&=x_0+\frac{1}{\Gamma(\alpha)}\int_0^t(t-s)^{\alpha-1}f\left(\underline{s}/\epsilon,z(\underline{s})\right)ds+\frac{1}{\Gamma(\alpha)}\int_0^t(t-\underline{s})^{\alpha-1}g(\underline{s}/\epsilon,z(\underline{s}))dB_s.
\end{split}
\end{equation}

\begin{lemma}
\label{lemma5-1}
Let $x_n$ be the solution of the Euler-Maruyama scheme (\ref{the5-1}), $z(t)$ be the continuous time stochastic process defined by (\ref{the5-2}). For $0\leq n\leq N$, we have $z(t_n)=x_n$.
\end{lemma}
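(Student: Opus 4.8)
The plan is to evaluate the continuous-time process $z(t)$ at the grid points $t_n$ and show that the integrals in \eqref{the5-2} collapse exactly into the sums appearing in the Euler--Maruyama scheme \eqref{the5-1}. The argument is purely a matter of bookkeeping: since the integrands in \eqref{the5-2} involve the step function $\underline{s}$, they are piecewise constant in $z$ on each subinterval $(t_i, t_{i+1}]$, so the integrals split as finite sums of elementary integrals that can be computed in closed form.

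First I would set $t = t_n$ in \eqref{the5-2} and write $\int_0^{t_n} = \sum_{i=0}^{n-1}\int_{t_i}^{t_{i+1}}$ for both the drift and the diffusion term. On the interval $(t_i, t_{i+1}]$ we have $\underline{s} = t_i$, hence $f(\underline{s}/\epsilon, z(\underline{s})) = f(t_i/\epsilon, z(t_i))$ and $g(\underline{s}/\epsilon, z(\underline{s})) = g(t_i/\epsilon, z(t_i))$, which are constants that pull out of each sub-integral. For the drift term this leaves $\int_{t_i}^{t_{i+1}} (t_n - s)^{\alpha-1}\,ds = \frac{1}{\alpha}\big[(t_n - t_i)^\alpha - (t_n - t_{i+1})^\alpha\big] = \frac{(\Delta t)^\alpha}{\alpha}\big[(n-i)^\alpha - (n-i-1)^\alpha\big]$, which together with the $1/\Gamma(\alpha)$ prefactor gives exactly the drift sum in \eqref{the5-1} with $\Gamma(\alpha+1) = \alpha\Gamma(\alpha)$. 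For the diffusion term, note that \eqref{the5-2} uses the kernel $(t - \underline{s})^{\alpha-1}$, which on $(t_i,t_{i+1}]$ equals the constant $(t_n - t_i)^{\alpha-1} = (\Delta t)^{\alpha-1}(n-i)^{\alpha-1}$; pulling this out leaves $\int_{t_i}^{t_{i+1}} dB_s = \Delta B_i$, reproducing the diffusion sum in \eqref{the5-1}.

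The proof then concludes by induction on $n$: matching the recursion satisfied by $x_n$ in \eqref{the5-1} with the identity just derived for $z(t_n)$, using $z(t_0) = z(0) = x_0$ as the base case, so that $z(t_i) = x_i$ for all $i < n$ feeds into the sums and yields $z(t_n) = x_n$. I do not anticipate a genuine obstacle here; the only point requiring care is the placement of the step function inside the diffusion kernel in \eqref{the5-2} — the kernel is evaluated at $\underline{s}$ rather than $s$, which is precisely what makes the discretization exact rather than merely approximate — so I would make sure the definition of $z(t)$ in \eqref{the5-2} is used verbatim and not confused with the true Volterra equation \eqref{the2-4}.
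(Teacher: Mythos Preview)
Your proposal is correct and matches the paper's own proof essentially line for line: the paper also proceeds by induction with base case $z(0)=x_0$, splits $\int_0^{t_n}$ into $\sum_{m=0}^{n-1}\int_{t_m}^{t_{m+1}}$, uses $\underline{s}=t_m$ on each subinterval to pull out the frozen coefficients, and evaluates the two elementary integrals exactly as you describe. Your explicit remark that the diffusion kernel in \eqref{the5-2} is $(t-\underline{s})^{\alpha-1}$ rather than $(t-s)^{\alpha-1}$ is precisely the point that makes the identity exact, and the paper relies on it in the same way.
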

\begin{proof}
We use induction to prove this lemma. Firstly, $z(0)=x_0$ is obvious. Suppose that $z(t_m)=x_m$ holds for $0\leq m\leq n-1$. For $t=t_n$, we have
\begin{equation}\label{the5-3}
\begin{split}
z(t_n)&=x_0+\frac{1}{\Gamma(\alpha)}\int_0^{t_n}(t_n-s)^{\alpha-1}f\left(\underline{s}/\epsilon,z(\underline{s})\right)ds+\frac{1}{\Gamma(\alpha)}\int_0^{t_n}(t_n-\underline{s})^{\alpha-1}g\left(\underline{s}/\epsilon,z(\underline{s})\right)dB_s\\
&=x_0+\frac{1}{\Gamma(\alpha)}\sum\limits_{m=0}^{n-1}\int_{t_m}^{t_{m+1}}(t_n-s)^{\alpha-1}f\left(\underline{s}/\epsilon,z(\underline{s})\right)ds\\
&+\frac{1}{\Gamma(\alpha)}\sum\limits_{m=0}^{n-1}\int_{t_m}^{t_{m+1}}(t_n-\underline{s})^{\alpha-1}g\left(\underline{s}/\epsilon,z(\underline{s})\right)dB_s\\
&=x_0+\frac{1}{\Gamma(\alpha+1)}(\Delta t)^\alpha\sum\limits_{m=0}^{n-1}\left[(n-m)^\alpha-(n-m-1)^\alpha\right]f\left(\frac{t_m}{\epsilon},z(t_m)\right)\\
&+\frac{1}{\Gamma(\alpha)}(\Delta t)^{\alpha-1}\sum\limits_{m=0}^{n-1}(n-m)^{\alpha-1}g\left(\frac{t_m}{\epsilon},z(t_m)\right)\Delta B_m.
\end{split}
\end{equation}
We thus obtain $z(t_n)=x_n$.
\end{proof}

\begin{lemma}
	\label{lemma5-2}
For $z(t)$, the following moment estimate holds 
\begin{equation}\label{the5-4}
\begin{split}
\mathbb{E}\left(\sup\limits_{0\leq s\leq T}|z(s)|^2\right)\leq C_{L53},
\end{split}
\end{equation}
where $C_{L53}$ depends on $L, \alpha, x_0, ,K,T$.
\end{lemma}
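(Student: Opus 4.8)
The plan is to mirror the moment-estimate step in the proof of Theorem~\ref{theorem3-1}, now applied to the auxiliary process \eqref{the5-2}; the only genuinely new features are the step function $\underline s$ inside the coefficients and the shifted singular kernel $(t-\underline s)^{\alpha-1}$ in the stochastic integral. Set $\phi(t):=\mathbb{E}\big(\sup_{0\le r\le t}|z(r)|^2\big)$. Applying $|a+b+c|^2\le 3(|a|^2+|b|^2+|c|^2)$ to \eqref{the5-2} splits $\sup_{0\le r\le t}|z(r)|^2$ into the $x_0$ term, the drift convolution, and the stochastic convolution. Under Assumptions~\ref{assumption1} and~\ref{assumption3} both coefficients have linear growth, $|f(\tau,x)|^2\vee\Vert g(\tau,x)\Vert^2\le C(1+|x|^2)$ with $C=C(L,K)$ (equivalently, combine Remark~\ref{remark2-5} with the concavity bound $\psi(u)\le C(1+u)$ used in \eqref{the3-9}); moreover, since $\underline s\le s$ one has $|z(\underline s)|^2\le\sup_{0\le r\le s}|z(r)|^2$, so every coefficient evaluated at $z(\underline s)$ is controlled by $\phi(s)$ after taking expectations.

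For the drift convolution, Cauchy--Schwarz in the $s$-integral gives $\big|\int_0^r(r-s)^{\alpha-1}f(\underline s/\epsilon,z(\underline s))\,ds\big|^2\le\frac{T^{2\alpha-1}}{2\alpha-1}\int_0^r|f(\underline s/\epsilon,z(\underline s))|^2\,ds$, where $\alpha>\tfrac12$ guarantees that $(r-s)^{2\alpha-2}$ is integrable; taking the supremum over $r\le t$, then expectations, and inserting linear growth yields a contribution of the form $C+C\int_0^t\phi(s)\,ds$.

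The stochastic convolution is the main obstacle. I would first record the elementary bound $(r-\underline s)^{\alpha-1}\le(r-s)^{\alpha-1}$ (valid since $\underline s\le s$ and $\alpha-1<0$), which dominates the shifted kernel by the Riemann--Liouville one. Mimicking \eqref{the3-6}, I would then apply the Burkholder--Davis--Gundy inequality to obtain $\mathbb{E}\big(\sup_{0\le r\le t}\big|\int_0^r(r-\underline s)^{\alpha-1}g(\underline s/\epsilon,z(\underline s))\,dB_s\big|^2\big)\le C\,\mathbb{E}\int_0^t(t-s)^{2\alpha-2}\Vert g(\underline s/\epsilon,z(\underline s))\Vert^2\,ds\le C\int_0^t(t-s)^{2\alpha-2}\big(1+\phi(s)\big)\,ds$. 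The delicate point is that $r\mapsto\int_0^r(r-\underline s)^{\alpha-1}g(\cdot)\,dB_s$ is not a martingale in $r$ (the kernel depends on the upper limit), so a fully rigorous version of this step would instead first establish the pointwise bound $\sup_{0\le t\le T}\mathbb{E}|z(t)|^2\le C$ via the It\^o isometry $\mathbb{E}\big|\int_0^t(t-\underline s)^{\alpha-1}g\,dB_s\big|^2=\mathbb{E}\int_0^t(t-\underline s)^{2\alpha-2}\Vert g\Vert^2\,ds$ and a weakly singular Gronwall inequality, and then upgrade to the supremum by decomposing, for $t\in(t_k,t_{k+1}]$, the stochastic convolution into the completed sum $\sum_{m=0}^{k-1}(t-t_m)^{\alpha-1}g(t_m/\epsilon,z(t_m))\Delta B_m$ (a discrete martingale transform in $m$, to which Doob's inequality and the discrete analogue of $\int_0^t(t-s)^{2\alpha-2}\,ds\le T^{2\alpha-1}/(2\alpha-1)$ apply after bounding the weights by $(t-t_m)^{\alpha-1}\le(t_k-t_m)^{\alpha-1}$) plus the last-interval remainder $(t-t_k)^{\alpha-1}g(t_k/\epsilon,z(t_k))(B_t-B_{t_k})$, whose supremum over $t\in(t_k,t_{k+1}]$ is handled by Brownian scaling and is of order $(\Delta t)^{2\alpha-1}$ precisely because $2\alpha-1>0$ makes $\mathbb{E}\big(\sup_{0<v\le1}v^{2\alpha-2}|B_v|^2\big)$ finite. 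Either route produces a mesh-independent bound of the form $C+C\int_0^t\big(1+(t-s)^{2\alpha-2}\big)\phi(s)\,ds$.

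Collecting the three contributions gives $\phi(t)\le C+C\int_0^t\big(1+(t-s)^{2\alpha-2}\big)\phi(s)\,ds$ with an integrable kernel ($\alpha>\tfrac12$), and the weakly singular Gronwall inequality — or Lemma~\ref{lemma2-6} with $\psi$ now linear, exactly as in the passage from \eqref{the3-9} to \eqref{the3-10} — yields $\phi(T)\le C_{L53}$ with $C_{L53}$ depending only on $L,\alpha,x_0,K,T$, which is the claim. I expect the only real work to be the second half of the stochastic-term estimate, namely securing the mesh-independent bound for the supremum of the stochastic convolution with the shifted singular kernel.
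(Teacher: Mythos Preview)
Your approach is exactly what the paper does: its entire proof of this lemma is the single sentence ``By following arguments of Theorem~\ref{theorem3-1}, we can easily obtain the desired result,'' so mirroring the moment-estimate step of Theorem~\ref{theorem3-1} with linear growth and a weakly singular Gronwall inequality is precisely the intended argument. Your careful discussion of the $\mathbb{E}\sup$ versus $\sup\mathbb{E}$ issue for the stochastic convolution with $r$-dependent kernel in fact goes beyond what the paper makes explicit---Theorem~\ref{theorem3-1} itself only proves $\sup_t\mathbb{E}|x(t)|^p$ in \eqref{the3-21}, and the subsequent lemmas (e.g.\ \eqref{the5-9}) only ever use the weaker pointwise bound, so your concern is legitimate even if the paper does not address it.
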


By following arguments of Theorem \ref{theorem3-1}, we can easily obtain the desired result.\\

\begin{lemma}
	\label{lemma5-3}
For $t\in[t_n,t_{n+1})$ and $0\leq n\leq N-1$, we have the following estimate
\begin{equation}
	\label{the5-5}
	\begin{split}
		\mathbb{E}\left(|z(t)-z(t_n)|^2\right)\leq C_{L54}(\Delta t)^{2\alpha-1},
	\end{split}
\end{equation}
where $C_{L54}$ depends on $\alpha$ and $C_{L53}$.
\end{lemma}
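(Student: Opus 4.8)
The plan is to estimate $\mathbb{E}\left(|z(t)-z(t_n)|^2\right)$ directly from the defining integral equation~(\ref{the5-2}), writing $z(t)-z(t_n)$ as a sum of the increments of the drift and diffusion integrals. Concretely, I would split
\begin{equation*}
\begin{split}
z(t)-z(t_n)&=\frac{1}{\Gamma(\alpha)}\int_0^{t_n}\left[(t-s)^{\alpha-1}-(t_n-s)^{\alpha-1}\right]f\left(\underline{s}/\epsilon,z(\underline{s})\right)ds\\
&+\frac{1}{\Gamma(\alpha)}\int_{t_n}^{t}(t-s)^{\alpha-1}f\left(\underline{s}/\epsilon,z(\underline{s})\right)ds\\
&+\frac{1}{\Gamma(\alpha)}\int_0^{t_n}\left[(t-\underline{s})^{\alpha-1}-(t_n-\underline{s})^{\alpha-1}\right]g\left(\underline{s}/\epsilon,z(\underline{s})\right)dB_s\\
&+\frac{1}{\Gamma(\alpha)}\int_{t_n}^{t}(t-\underline{s})^{\alpha-1}g\left(\underline{s}/\epsilon,z(\underline{s})\right)dB_s,
\end{split}
\end{equation*}
and bound $\mathbb{E}|z(t)-z(t_n)|^2$ by four times the sum of the $L^2$-norms of these pieces. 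For the drift terms I would apply the Cauchy--Schwarz (Hölder) inequality in the $s$-variable, for the diffusion terms the Itô isometry; in both cases the linear-growth bound from Remark~\ref{remark2-5} (with $p=2$, $\psi(r)\le C(1+r)$) together with the moment estimate of Lemma~\ref{lemma5-2} controls $\mathbb{E}|f(\underline{s}/\epsilon,z(\underline{s}))|^2$ and $\mathbb{E}\|g(\underline{s}/\epsilon,z(\underline{s}))\|^2$ by a constant depending on $C_{L53}$.

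The crux is then a set of deterministic kernel estimates. The ``near'' pieces on $[t_n,t]$ are straightforward: $\int_{t_n}^t(t-s)^{2\alpha-2}\,ds=\frac{(t-t_n)^{2\alpha-1}}{2\alpha-1}\le\frac{(\Delta t)^{2\alpha-1}}{2\alpha-1}$ since $2\alpha-1>0$, and likewise $\int_{t_n}^t(t-\underline{s})^{2\alpha-2}\,ds\le(\Delta t)(\Delta t)^{2\alpha-2}=(\Delta t)^{2\alpha-1}$ because $t-\underline{s}\ge t-t_n$ on that interval. The genuinely delicate part is the ``difference-of-kernel'' integrals $\int_0^{t_n}\big[(t-s)^{\alpha-1}-(t_n-s)^{\alpha-1}\big]^2\,ds$ (and its $\underline{s}$-analogue), which I expect to be the main obstacle. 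Here I would use that for $0<\alpha-1<0$ the map $u\mapsto u^{\alpha-1}$ is decreasing and convex, so $(t_n-s)^{\alpha-1}-(t-s)^{\alpha-1}\ge 0$, and then estimate the integral of the square by standard manipulations: substituting $u=t_n-s$ gives $\int_0^{t_n}\big[u^{\alpha-1}-(u+\Delta t)^{\alpha-1}\big]^2\,du$ with $\Delta t\ge t-t_n$, and this quantity is $O((\Delta t)^{2\alpha-1})$ — a computation analogous to the classical bound $\int_0^{T}\big((s)^{\alpha-1}-(s+h)^{\alpha-1}\big)^2ds\le C h^{2\alpha-1}$ used throughout the fractional-SDE literature (e.g.\ in \cite{doan2020euler}). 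One can see the scaling by splitting at $u=\Delta t$: on $(0,\Delta t)$ bound the bracket by $u^{\alpha-1}$ and integrate to get $(\Delta t)^{2\alpha-1}$; on $(\Delta t,t_n)$ use the mean value theorem, $u^{\alpha-1}-(u+\Delta t)^{\alpha-1}\le(1-\alpha)\Delta t\,u^{\alpha-2}$, square and integrate to get a term of order $(\Delta t)^2(\Delta t)^{2\alpha-3}=(\Delta t)^{2\alpha-1}$ (valid precisely because $2\alpha-3<-1$, i.e.\ $\alpha<1$).

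Collecting the four contributions, each is bounded by $C(\Delta t)^{2\alpha-1}$ times a constant depending only on $\alpha$ and $C_{L53}$, which yields~(\ref{the5-5}) with the claimed dependence of $C_{L54}$. I would remark that the restriction $\alpha\in(\tfrac12,1)$ is used twice — positivity of $2\alpha-1$ for the near-field terms and $2\alpha-3<-1$ for the integrability of $u^{\alpha-2}$ in the far-field term — so the exponent $2\alpha-1$ in~(\ref{the5-5}) is the natural (and sharp) Hölder regularity exponent for $z$ in mean square, mirroring the well-known temporal regularity of Riemann--Liouville fractional integrals of order $\alpha$.
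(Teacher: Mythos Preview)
Your proposal is correct and follows the same overall architecture as the paper's proof: the identical four-term decomposition of $z(t)-z(t_n)$, Cauchy--Schwarz for the drift pieces, It\^o isometry for the diffusion pieces, and the linear-growth bound combined with Lemma~\ref{lemma5-2} to control the coefficients. The only substantive difference is in how the difference-of-kernel integral $\int_0^{t_n}\big[(t-s)^{\alpha-1}-(t_n-s)^{\alpha-1}\big]^2\,ds$ is handled. The paper uses the elementary algebraic inequality $(c^r-x^r)^2\le x^{2r}-c^{2r}$ for $0\le x\le c$, $r\in(-1,0)$, which turns the integrand into the telescoping difference $(t_n-s)^{2\alpha-2}-(t-s)^{2\alpha-2}$ and gives $C(t-t_n)^{2\alpha-1}$ in one line after integration. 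Your split-at-$\Delta t$ plus mean-value-theorem argument reaches the same bound and is the more commonly seen technique in the fractional SDE literature; the paper's device is shorter but less transparent about where the exponent $2\alpha-1$ comes from. Either route is perfectly adequate here.
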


\begin{proof}
For $t\in(t_n,t_{n+1})$, we have
\begin{equation}\label{the5-6}
\begin{split}
&z(t)-z(t_n)=\frac{1}{\Gamma(\alpha)}\int_{t_n}^{t}(t-s)^{\alpha-1}f(\underline{s}/\epsilon,z(\underline{s}))ds+\frac{1}{\Gamma(\alpha)}\int_{t_n}^{t}(t-\underline{s})^{\alpha-1}g(\underline{s}/\epsilon,z(\underline{s}))dB_s\\
&+\frac{1}{\Gamma(\alpha)}\int_{0}^{t_n}\left[(t-s)^{\alpha-1}-(t_n-s)^{\alpha-1}\right]f(\underline{s}/\epsilon,z(\underline{s}))ds\\
&+\frac{1}{\Gamma(\alpha)}\int_{0}^{t_n}\left[(t-\underline{s})^{\alpha-1}-(t_n-\underline{s})^{\alpha-1}\right]g(\underline{s}/\epsilon,z(\underline{s}))dB_s.
\end{split}
\end{equation}
This yields
\begin{equation}\label{the5-8}
\begin{split}
&\mathbb{E}\left(|z(t)-z(t_n)|^2\right)\leq \frac{2}{\Gamma(\alpha)^2}\mathbb{E}\left(\left|\int_{t_n}^{t}(t-s)^{\alpha-1}f(\underline{s}/\epsilon,z(\underline{s}))ds\right|^2\right)\\
&+\frac{2}{\Gamma(\alpha)^2}\mathbb{E}\left(\left|\int_{t_n}^{t}(t-\underline{s})^{2\alpha-2}g(\underline{s}/\epsilon,z(\underline{s}))^2ds\right|\right)\\
&+\frac{2}{\Gamma(\alpha)^2}\mathbb{E}\left(\left|\int_{0}^{t_n}\left[(t-s)^{\alpha-1}-(t_n-s)^{\alpha-1}\right]f(\underline{s}/\epsilon,y(\underline{s}))ds\right|^2\right)\\
&+\frac{2}{\Gamma(\alpha)^2}\mathbb{E}\left(\left|\int_{0}^{t_n}\left[(t-\underline{s})^{\alpha-1}-(t_n-\underline{s})^{\alpha-1}\right]^2g(\underline{s}/\epsilon,y(\underline{s}))^2ds\right|\right)\\
&=:\sum\limits_{i=1}^{4}N_i.
\end{split}
\end{equation}
	
By the Cauchy-Schwarz inequality, we can deduce that
\begin{equation}
\label{the5-9}
\begin{split}
N_1&\leq C\left(\int_{t_n}^t(t-s)^{2\alpha-2}ds\right)\int_{t_n}^{t}\mathbb{E}\left(|f(\underline{s}/\epsilon,z(\underline{s})|^2\right)ds\\
&\leq C(t-t_n)^{2\alpha},
\end{split}
\end{equation}
and $N_2\leq C(t-t_n)^{2\alpha-1}$.

Note that $(c^r-x^r)^2\leq (x^{2r}-c^{2r})$ for $0\leq x\leq c$ and $r\in(-1,0)$. For $N_3$ and $N_4$, we obtain
\begin{equation}
\label{the5-10}
\begin{split}
N_3+N_4&\leq C\int_{0}^{t_n}[(t-s)^{\alpha-1}-(t_n-s)^{\alpha-1}]^2\mathbb{E}|f(\underline{s}/\epsilon,x(s))|^2ds.\\
&+C\int_{0}^{t_n}[(t-\underline{s})^{\alpha-1}-(t_n-\underline{s})^{\alpha-1}]^2\mathbb{E}\Vert g(\underline{s}/\epsilon,x(s))\Vert^2ds\\
&\leq C\int_{0}^{t_n}\left[(t_n-s)^{2\alpha-2}-(t-s)^{2\alpha-2}\right]ds\\
&\leq C(t-t_n)^{2\alpha-1}.
\end{split}
\end{equation}
	
Combining (\ref{the5-8})-(\ref{the5-10}), we obtain
\begin{equation}\label{the5-11}
\begin{split}
\mathbb{E}\left(|z(t)-z(t_n)|^2\right)\leq C(t-t_n)^{2\alpha-1}+C(t-t_n)^{2\alpha}\leq C_{L54}(\Delta t)^{2\alpha-1}.
\end{split}
\end{equation}

The proof is completed.
\end{proof}

\begin{theorem}
	\label{theorem5-5}
Let $x_\epsilon(t)$ and $z(t)$ be the solutions to the FSDE (\ref{the2-4}) and (\ref{the5-2}), respectively. $x_n$ be the solution of the Euler-Maruyama scheme (\ref{the5-1}) corresponding to (\ref{the2-4}), we have the following error estimate
\begin{equation}\label{the5-12}
\begin{split}
\max\limits_{0\leq n\leq N}\mathbb{E}\left(|x_\epsilon(t_n)-x_n|^2\right)\lesssim(\Delta t)^{2\alpha-1}+\left(\frac{\Delta t}{\epsilon}\right)^2.
\end{split}
\end{equation}
\end{theorem}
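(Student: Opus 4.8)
The plan is to reduce everything to a comparison between the exact solution $x_\epsilon$ of \eqref{the2-4} and the auxiliary continuous-time process $z$ of \eqref{the5-2}: by Lemma \ref{lemma5-1} we have $z(t_n)=x_n$, so $\max_{0\le n\le N}\mathbb{E}(|x_\epsilon(t_n)-x_n|^2)\le \sup_{0\le t\le T}\mathbb{E}(|x_\epsilon(t)-z(t)|^2)$, and it suffices to bound the latter. Subtracting \eqref{the5-2} from \eqref{the2-4}, the difference $x_\epsilon(t)-z(t)$ is the sum of a drift integral against the kernel $(t-s)^{\alpha-1}$ and a diffusion integral involving the two kernels $(t-s)^{\alpha-1}$ and $(t-\underline s)^{\alpha-1}$; I would estimate each contribution in $L^2$ and then close with a Gronwall-type argument.

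For the drift part I would decompose $f(s/\epsilon,x_\epsilon(s))-f(\underline s/\epsilon,z(\underline s))$ into the three pieces $f(s/\epsilon,x_\epsilon(s))-f(\underline s/\epsilon,x_\epsilon(s))$, $f(\underline s/\epsilon,x_\epsilon(s))-f(\underline s/\epsilon,x_\epsilon(\underline s))$ and $f(\underline s/\epsilon,x_\epsilon(\underline s))-f(\underline s/\epsilon,z(\underline s))$. By the Lipschitz Assumption \ref{assumption3} these are bounded in absolute value by $K|s-\underline s|/\epsilon\le K\Delta t/\epsilon$, by $K|x_\epsilon(s)-x_\epsilon(\underline s)|$, and by $K|x_\epsilon(\underline s)-z(\underline s)|$ respectively. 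After the Cauchy--Schwarz inequality in $s$ (the weight $(t-s)^{2\alpha-2}$ being integrable precisely because $\alpha>\tfrac12$), the first piece contributes a term of order $(\Delta t/\epsilon)^2$, the third feeds the Gronwall argument, and the second requires the H\"older-type regularity $\mathbb{E}(|x_\epsilon(s)-x_\epsilon(\underline s)|^2)\le C(\Delta t)^{2\alpha-1}$ of the exact solution, which I would prove exactly as Lemma \ref{lemma5-3} (from \eqref{the2-4}, the Burkholder--Davis--Gundy inequality, the elementary bound $(c^{r}-x^{r})^2\le x^{2r}-c^{2r}$ for $0\le x\le c,\ r\in(-1,0)$, and the moment bound $\sup_t\mathbb{E}|x_\epsilon(t)|^2\le C_{T31}$ from Theorem \ref{theorem3-1}). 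The diffusion part is handled in the same spirit: using the It\^o isometry one first separates off the kernel-mismatch term $\int_0^t[(t-s)^{\alpha-1}-(t-\underline s)^{\alpha-1}]g(\underline s/\epsilon,z(\underline s))\,dB_s$, whose second moment is controlled by $\int_0^t[(t-s)^{2\alpha-2}-(t-\underline s)^{2\alpha-2}]\,ds$ (again via the $r\in(-1,0)$ inequality, together with Lemma \ref{lemma5-2}) and is of order $(\Delta t)^{2\alpha-1}$; the remaining stochastic integral is bounded by the It\^o isometry and the same threefold splitting of $g$ as above.

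Collecting all the estimates and passing to the supremum over $[0,t]$ (the right-hand side being nondecreasing in $t$, as in \eqref{the3-15}), and writing $h(t):=\sup_{0\le r\le t}\mathbb{E}(|x_\epsilon(r)-z(r)|^2)$, I expect to reach an inequality of the form
\[
h(t)\le C\Big((\Delta t)^{2\alpha-1}+\big(\tfrac{\Delta t}{\epsilon}\big)^2\Big)+C\int_0^t\big(1+(t-s)^{2\alpha-2}\big)h(s)\,ds,
\]
where the constant $1$ comes from the non-singular kernel produced by Cauchy--Schwarz on the drift term and $(t-s)^{2\alpha-2}$ from the It\^o isometry on the diffusion term. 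Since the coefficients are now genuinely Lipschitz, this is the linear weakly-singular Gronwall situation --- a special case of the Bihari inequality of Lemma \ref{lemma2-6} with $\psi$ linear --- so it closes to $h(T)\le C\big((\Delta t)^{2\alpha-1}+(\Delta t/\epsilon)^2\big)$, which gives \eqref{the5-12}. The main obstacle is the careful bookkeeping around the singular kernel: one must keep $\alpha>\tfrac12$ in play at every step so that $(t-s)^{2\alpha-2}$ remains integrable, estimate the kernel difference $(t-s)^{\alpha-1}-(t-\underline s)^{\alpha-1}$ sharply enough to recover the full $(\Delta t)^{2\alpha-1}$ rate rather than something weaker, and justify the weakly-singular Gronwall step (by iterating the inequality, or invoking a fractional Gronwall lemma) rather than the classical one.
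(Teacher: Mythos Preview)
Your proposal is correct and follows essentially the same route as the paper: subtract \eqref{the5-2} from \eqref{the2-4}, separate the kernel-mismatch stochastic term, use the Lipschitz Assumption \ref{assumption3} to produce the $(\Delta t/\epsilon)^2$ contribution and a Gronwall term, feed in a H\"older-regularity estimate of order $(\Delta t)^{2\alpha-1}$, and close with a weakly-singular Gronwall inequality. The one noteworthy variation is in the intermediate splitting: you insert the point $x_\epsilon(\underline s)$ and therefore need the time-regularity $\mathbb{E}|x_\epsilon(s)-x_\epsilon(\underline s)|^2\le C(\Delta t)^{2\alpha-1}$ of the \emph{exact} solution, whereas the paper inserts $z(s)$, writes $\mathbb{E}|x_\epsilon(s)-z(\underline s)|^2\le 2\mathbb{E}|x_\epsilon(s)-z(s)|^2+2\mathbb{E}|z(s)-z(\underline s)|^2$, and invokes the already-established Lemma \ref{lemma5-3} for $z$. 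Both choices work, but the paper's route is marginally more economical since it recycles Lemma \ref{lemma5-3} directly rather than proving an analogous (though equally easy) regularity lemma for $x_\epsilon$.
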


\begin{proof}
For $t\in [t_n,t_{n+1})$ with $0\leq n\leq N-1$, we have
\begin{equation}\label{the5-13}
\begin{split}
&\mathbb{E}\left(|x_\epsilon(t)-z(t)|^2\right)\leq \frac{3}{\Gamma(\alpha)^2}\mathbb{E}\left(\left|\int_{0}^{t}(t-s)^{\alpha-1}\left(f(s/\epsilon,x_\epsilon(s))-f(\underline{s}/\epsilon,z(\underline{s}))\right)ds\right|^2\right)\\
&+\frac{3}{\Gamma(\alpha)^2}\mathbb{E}\left(\left|\int_{0}^{t}(t-s)^{\alpha-1}\left(g(s/\epsilon,x(s))-g(\underline{s}/\epsilon,z(\underline{s}))\right)ds\right|^2\right)\\
&+\frac{3}{\Gamma(\alpha)^2}\mathbb{E}\left(\left|\int_{0}^{t}\left((t-s)^{\alpha-1}-(t-\underline{s})^{\alpha-1}\right)g(\underline{s}/\epsilon,z(\underline{s}))ds\right|^2\right).
\end{split}
\end{equation}
	
By using Assumption \ref{assumption3}, Lemmas \ref{lemma5-2} and \ref{lemma5-3}, we can further obtain
\begin{equation}\label{the5-14}
\begin{split}
&\mathbb{E}\left(|x_\epsilon(t)-z(t)|^2\right)\leq C\sum\limits_{i=0}^{n-1}\int_{t_i}^{t_{i+1}}(t-s)^{2\alpha-2}\frac{(s-\underline{s})^2}{\epsilon^2}ds +C\int_{0}^{t}(t-s)^{2\alpha-2}\mathbb{E}\left(|x_\epsilon (s)-z(\underline{s})|^2\right)ds\\
&+C\int_{0}^t\left[(t-s)^{2\alpha-2}-(t-\underline{s})^{2\alpha-2}\right]ds\\
&\leq C\int_{0}^{t}(t-s)^{2\alpha-2}\mathbb{E}\left(|x_\epsilon (s)-z(s)|^2\right)ds+C\int_{0}^{t}(t-s)^{2\alpha-2}\mathbb{E}\left(|z(s)-z(\underline{s})|^2\right)ds\\
&+C\left(\frac{\Delta t}{\epsilon}\right)^2+C\int_{0}^t\left[(t-s)^{2\alpha-2}-(t-s+s-\underline{s})^{2\alpha-2}\right]ds\\
&\leq C\int_{0}^{t}(t-s)^{2\alpha-2}\mathbb{E}\left(|x_\epsilon (s)-z(s)|^2\right)ds+C(\Delta t)^{2\alpha-1}+C\left(\frac{\Delta t}{\epsilon}\right)^2\\
&+C\int_{0}^t\left[(t-s)^{2\alpha-2}-(t-s+\Delta t)^{2\alpha-2}\right]ds\\
&\leq C\int_{0}^{t}(t-s)^{2\alpha-2}\mathbb{E}\left(|x_\epsilon (s)-z(s)|^2\right)ds+C(\Delta t)^{2\alpha-1}+C\left(\frac{\Delta t}{\epsilon}\right)^2.
\end{split}
\end{equation}
	
By applying the Gronwall inequality, we obtain
\begin{equation}\label{the5-15}
\begin{split}
&\mathbb{E}\left(|x_\epsilon(t)-z(t)|^2\right)\leq C\left((\Delta t)^{2\alpha-1}+\left(\frac{\Delta t}{\epsilon}\right)^2\right).
\end{split}
\end{equation}
By Lemma \ref{lemma5-1}, (\ref{the5-12}) can be obtained immediately.
\end{proof}

\begin{remark}
It can be seen from (\ref{the5-12}) that the numerical error is not only related to the time step $\Delta t$ but also to the scale parameter $\epsilon$. For the homogenized system (\ref{the4-3}), we similarly use the Euler-Maruyama scheme for discretization to obtain the numerical solution $y_n$. Based on Theorem \ref{remark4-5}, we obtain the error 
\begin{equation}
\max\limits_{0\leq n\leq N}\mathbb{E}\left(|x_\epsilon(t_n)-y_n|^2\right)\lesssim (\Delta t)^{2\alpha-1}+\epsilon^\mu.
\end{equation}
\end{remark}

\begin{remark}
\label{remark5-7}
From the results of theoretical analysis, for multiscale non-autonomous systems with small scale paramete $\epsilon$, we need to use a small time step for calculation to avoid the error introduced by $\epsilon$. Balanced discretization errors are given for $(\Delta t)^{2\alpha-1}\approx \left(\frac{\Delta t}{\epsilon}\right)^2$, i.e., for $\Delta t\approx \epsilon^{\frac{2}{3-2\alpha}}$. Obtaining accurate numerical results in this way undoubtedly increases the computational cost. For the homogenized autonomous system, it can be seen that as $\epsilon$ decreases, the error between the numerical solution and the exact solution also decreases. Therefore, using the same time step to calculate the homogenized system has higher numerical accuracy, which is also demonstrated by our numerical results.
\end{remark}

\section{Numerical experiments}
\label{section6}
In this section, the first numerical example is given to test the convergence of scheme (\ref{the5-1}), and the second numerical example is shown to verify the theoretical result of Theorem \ref{theorem4-4}.  In the following simulations, the expectation is approximated by sample average \cite{cao2015numerical}. We define the mean square errors as
\begin{equation}\label{the6-1}
	\begin{split}
&e_{\Delta t}=\max\limits_{1\leq m\leq n}\left(\frac{1}{2000}\sum\limits_{i=1}^{2000}\Vert x^{(n,\epsilon)}(t_m,\omega_i)-x^{(2n,\epsilon)}(t_m,\omega_i)\Vert^2\right)^{1/2},\\
&e_{\epsilon}=\max\limits_{1\leq m\leq n}\left(\frac{1}{2000}\sum\limits_{i=1}^{2000}\Vert x^{(n,\epsilon)}(t_m,\omega_i)-x^{(n,2\epsilon)}(t_m,\omega_i)\Vert^2\right)^{1/2},
	\end{split}
\end{equation}
where $\omega_i$ denotes the $i$th sample path. All of computations are performed by using a MATLAB (R2023b) subroutine on a laptop computer with the Intel(R) Core(TM) i7-8550U CPU @ 1.80GHz and 8.0G RAM.

\subsection{Convergence test}
Consider the following non-autonomous FSDEs

\begin{equation}\label{the6-2}
	\begin{split}
^CD_{0}^{\alpha}x_\epsilon(t)=\frac{t}{\epsilon}x_\epsilon(t)dt+x_\epsilon(t)dB_t, \quad t\in[0,T],
	\end{split}
\end{equation}
with initial value $x_\epsilon(0)=0.1$. 

We select two different $\alpha$ ($\alpha=0.9$ and $\alpha=0.7$) to test the convergence rates of $\Delta t$ and $\epsilon$. We first set $T=0.1$ and fix $\epsilon=1$ to calculate the error with different $\Delta t$. As we can observe from Table \ref{table1}, our numerical method $(\alpha-0.5)$-order convergence for $\Delta t$. In order to test the most critical result, which is the convergence rate of $\epsilon$, we set $T=10^{-6}$ and fix $\Delta t=10^{-8}$ and calculate the error with different $\epsilon$. Table \ref{table2} shows the proper scaling in $\epsilon^{-1}$. These numerical
results are consistent with the error analysis in Theorem \ref{theorem5-5}.

\begin{table}[H]
	\caption{Convergence rates of $\Delta t$ for scheme (\ref{the5-1}) with $\alpha=0.9$ and $\alpha=0.7$.}\label{table1}
	\centering
	\scalebox{0.65}{
\resizebox{\linewidth}{!}{
	\begin{tabular}{c c c c c} \hline  		
		\scriptsize   $\Delta t$   & \scriptsize  $e_{\Delta t}$ ($\alpha=0.9$) & \scriptsize   Order & \scriptsize  $e_{\Delta t}$ ($\alpha=0.7$) & \scriptsize  Order \\ \hline
		\scriptsize  1/80	& \scriptsize  4.19E-3 &  & \scriptsize   \scriptsize  2.10E-2 &  \\
		\scriptsize  1/160	& \scriptsize  3.06E-3  & \scriptsize  0.45 & \scriptsize 1.85E-2  & \scriptsize  0.18 \\  
		\scriptsize  1/320	& \scriptsize  2.40E-3    &  \scriptsize  0.35 & \scriptsize  1.64E-2 & \scriptsize  0.17 \\  
		\scriptsize  1/640	& \scriptsize  1.79E-3    & \scriptsize  0.42 &  \scriptsize  1.42E-2   & \scriptsize  0.21 \\  \hline
		\end{tabular}}
	}
\end{table}

\begin{table}[H]
	\caption{Convergence rates of $\epsilon$ for scheme (\ref{the5-1}) with $\alpha=0.9$ and $\alpha=0.7$.}\label{table2}
	\centering
	\scalebox{0.65}{
		\resizebox{\linewidth}{!}{
			\begin{tabular}{c c c c c} \hline  		
				\scriptsize   $\epsilon$   & \scriptsize  $e_{\epsilon}$ ($\alpha=0.9$) & \scriptsize   Order & \scriptsize  $e_{\epsilon}$ ($\alpha=0.7$) & \scriptsize  Order \\ \hline
				\scriptsize  4E-8		& \scriptsize  2.70E-6 &  & \scriptsize   \scriptsize  5.07E-5 &  \\
				\scriptsize  8E-8	& \scriptsize  1.35E-6  & \scriptsize  -1.00 & \scriptsize 2.54E-5  & \scriptsize  -1.00 \\  
				\scriptsize  1.6E-7	& \scriptsize  6.74E-7     &  \scriptsize  -1.00 & \scriptsize  1.27E-5 & \scriptsize  -1.00 \\  
				\scriptsize  3.2E-7	& \scriptsize  3.37E-7    & \scriptsize  -1.00 &  \scriptsize  6.34E-6   & \scriptsize  -1.00 \\  \hline
		\end{tabular}}
	}
\end{table}

\subsection{Stochastic fractional diffusion equation}
For the stochastic fractional diffusion equation \cite{niu2020inverse}
\begin{equation}\label{the6-3}
	\begin{split}
		^CD_{0}^{\alpha}x_\epsilon(t,\textbf{x})=\Delta x_\epsilon(t,\textbf{x})+f\left(\frac{t}{\epsilon},x_\epsilon(t,\textbf{x})\right)dt+g\left(\frac{t}{\epsilon},x_\epsilon(t,\textbf{x})\right)dB_t, \quad (t,\textbf{x})\in[0,T]\times \Omega,
	\end{split}
\end{equation}
with initial value $x_\epsilon(0,\textbf{x})$ and the homogeneous Dirichlet boundary condition, it can be simplified to the following FSDEs system (see \cite{wang2019wellposedness})
\begin{equation}\label{the6-4}
	\begin{split}
^CD_{0}^{\alpha}x_\epsilon(t)+\lambda x_\epsilon(t)=f\left(\frac{t}{\epsilon}, x_\epsilon(t)\right)dt+g\left(\frac{t}{\epsilon}, x_\epsilon(t)\right)dB_t, \quad t\in[0,T],
	\end{split}
\end{equation}
with initial value $x_\epsilon(0)=\frac{1}{2}$ and $\lambda>0$. We set the data
\begin{equation}
\begin{split}
f=2\cos^2\left(\frac{t}{\epsilon}\right)\sin\left(x_\epsilon(t)\right),\quad g\left(\frac{t}{\epsilon},x_\epsilon\right)=\left(e^{-\frac{t}{\epsilon}}+1\right)x_\epsilon(t), \quad \lambda=\frac{1}{2}.
\end{split}
\end{equation}

It is easy to verify that $f$ and $g$ satisfy the Assumption \ref{assumption1} and Assumption \ref{assumption3}. The system has a unique solution $x_\epsilon$ given by
\begin{equation}\label{the6-6}
\begin{split}
x_\epsilon(t)&=x_\epsilon(0)+\frac{1}{\Gamma(\alpha)}\int_0^t (t-s)^{\alpha-1}\left(2\cos^2\left(\frac{s}{\epsilon}\right)\sin\left(x_\epsilon(s)\right)-\lambda x_\epsilon(s)\right)ds\\
&+\frac{1}{\Gamma(\alpha)}\int_0^t (t-s)^{\alpha-1}\left(e^{-\frac{s}{\epsilon}}+1\right)x_\epsilon(s)dB_s.
\end{split}
\end{equation}

In Section \ref{section4-2}, we have analyzed the homogenized coefficients $\overline{f}(x_\epsilon(t))$ and $\overline{g}(x_\epsilon(t))$, and obtained the homogenization equation

\begin{equation}\label{the6-9}
	\begin{split}
		y_\epsilon(t)&=x_\epsilon(0)+\frac{1}{\Gamma(\alpha)}\int_0^t (t-s)^{\alpha-1}\left(\sin\left(y_\epsilon(s)\right)-\lambda y_\epsilon(s)\right)ds+\frac{1}{\Gamma(\alpha)}\int_0^t (t-s)^{\alpha-1}y_\epsilon(s)dB_s.\\
	\end{split}
\end{equation}

In the simulation, we set $\alpha=0.9$ and choose a smaller time step $\Delta t=1/1024$ to compute (\ref{the6-6}) as the reference solution. For different $\epsilon$, we use a coarse time step $\Delta t=1/256$ to compute systems (\ref{the6-6}) and (\ref{the6-9}), and define the errors based on the reference solution as $Ex_{\epsilon}$ and $Ey_{\epsilon}$, respectively. It can be seen from Figure \ref{figure6-1}(a) that when the scale difference of the variables is not significant, the result obtained by computing the original system (\ref{the6-6}) with the same step size is obviously more accurate than that obtained by computing the homogenized system (\ref{the6-9}). However, as the scale increases, the result is reversed. It is observed from Figure \ref{figure6-1}(b) that the computation of homogenization system (\ref{the6-9}) achieves better accuracy, which confirms the result of Theorem \ref{theorem4-4}. 

Compared to computing the original system, computing the homogenized system can use larger step sizes while maintaining accuracy, thus reducing computational costs and improving efficiency in computing application problems. Moreover, the larger the scale difference of variables, the better the effect of using the homogenization method.

\begin{figure}[H]
\centering
\subfigure[$\epsilon=10^{-1}$]{
\includegraphics[scale=0.55]{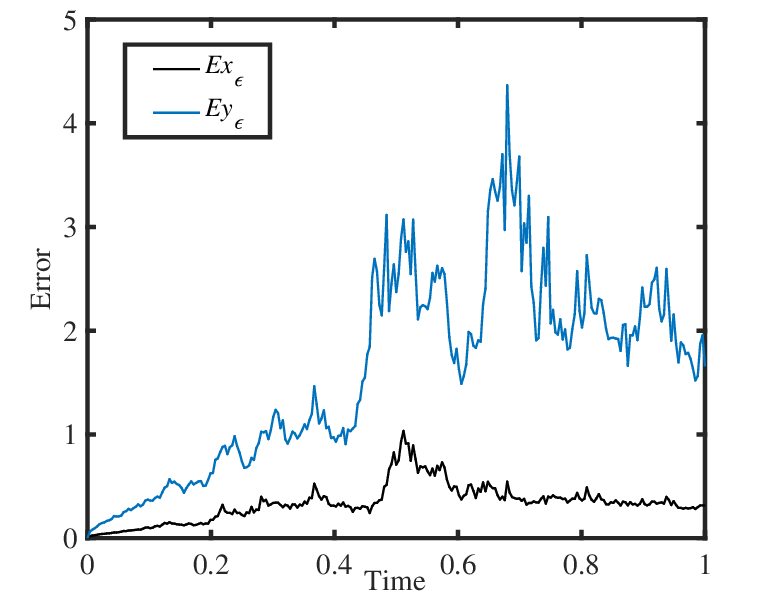}
  }
\subfigure[$\epsilon=10^{-4}$]{
\includegraphics[scale=0.55]{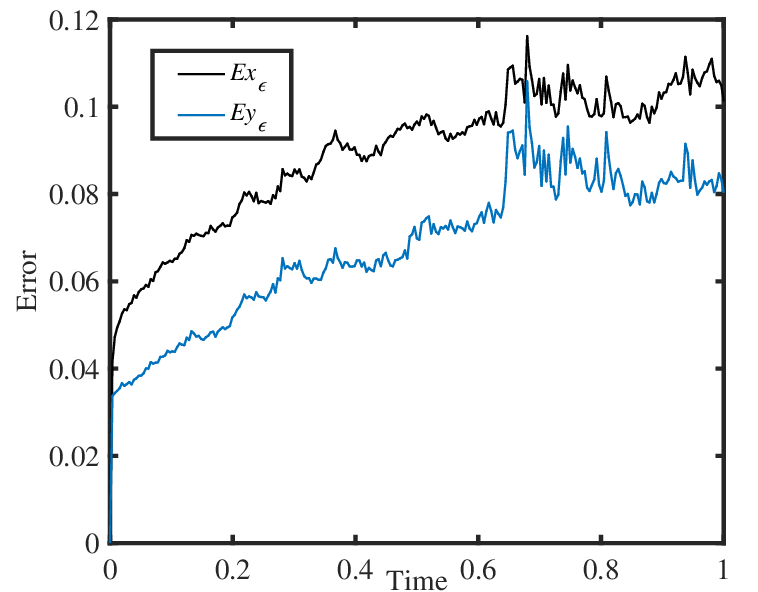}}
\caption{The error of the original variable $x_\epsilon$ and the homogenized variable $y_\epsilon$ with respect to the reference solution at different scales.}
\label{figure6-1}
\end{figure}

\section{Conclusions and remarks}
\label{section7}
In this paper, we establish the well-posedness and homogenization theory of FSDEs under the non-Lipschitz condition. To explore the advantages of the homogenization method in computational application problems, we construct an Euler-Maruyama scheme and carry out a rigorous numerical analysis. The numerical experiments verified our theoretical results. For FSDEs with significant differences in the scales of variables, computing the homogenized autonomous system avoids the errors introduced by the discretization of $t/\epsilon$, which makes the numerical computation more efficient.

For future work, we shall extend the analytical framework proposed in this paper to stochastic partial differential equations. In addition, the efficient computation of temporal multiscale and multiphysics field coupling problems with fractional derivatives (e.g., the plaque growth problem \cite{wang2023fast}) using the homogenization method is also a topic of our future research.

\section*{Acknowledgements}
This work is partially supported by the National Natural Science Foundation of China 12371388,
11861131004 and 11771040.

\bibliographystyle{siam}
\bibliography{Ref}
\end{document}